\newtheorem{theorem}{Theorem}[section]
\newtheorem{corollary}[theorem]{Corollary}
\newtheorem{proposition}[theorem]{Proposition}
\theoremstyle{definition}
\newtheorem{definition}[theorem]{Definition}
\newcommand{\Qo}{Q(u_0)}
\newcommand{\RR}{\mathbb{R}}
\newcommand{\R}{\mathbb{R}}
\newcommand{\GL}{\mathrm{GL}}
\newcommand{\SO}{\mathrm{SO}}
\newcommand{\OO}{\mathrm{O}}
\newcommand{\Log}{\mathrm{Log}}
\newcommand{\Qu}{Q(u)}
\newcommand{\piQu}{\pi_{Q(u)}}
\newcommand{\Quo}{Q(u_0)}
\newcommand{\piQuo}{\pi_{Q(u_0)}}
\newcommand{\dQuo}{d_{\Quo}}
\newcommand{\verbose}[1]{}
\title[Anisotropic Covariance and Sub-Riemannian Geometry]
{Modelling Anisotropic Covariance using Stochastic Development and Sub-Riemannian
Frame Bundle Geometry}
\author[Stefan Sommer and Anne Marie Svane]{}
\subjclass{Primary: 53C17, 60E05; Secondary: 62H11.}
\keywords{differentiable geometry, statistics, sub-Riemannian geometry, stochastic processes, stochastic development, non-linear data analysis}
 \email{sommer@di.ku.dk}
 \email{amsvane@math.au.dk}
\begin{document}
\maketitle

\centerline{\scshape Stefan Sommer}
\medskip
{\footnotesize
  \centerline{University of Copenhagen}
 \centerline{Universitetsparken 5}
 \centerline{DK-2100 Copenhagen E}
} 

\medskip

\centerline{\scshape Anne Marie Svane}
\medskip
{\footnotesize
 \centerline{Aarhus University}
   \centerline{Ny Munkegade 118}
   \centerline{8000 Aarhus C, Denmark}
}
\bigskip

 \centerline{(Communicated by the associate editor name)}

\begin{abstract}
 We discuss the geometric foundation behind the use of stochastic processes in the frame bundle of a smooth manifold to build stochastic models with applications in statistical analysis of non-linear data. The transition densities for the projection to the manifold of Brownian motions developed in the frame bundle lead to a family of probability distributions on the manifold. We explain how data mean and covariance can be interpreted as points in the frame bundle or, more precisely, in the bundle of symmetric positive definite 2-tensors analogously to the parameters describing Euclidean normal distributions. 
 We discuss a factorization of the frame bundle projection map through this bundle, the natural sub-Riemannian structure of the frame bundle, the effect of holonomy, and the existence of subbundles where the H\"ormander condition is satisfied such that the Brownian motions have smooth transition densities. We identify the most probable paths for the underlying Euclidean Brownian motion and discuss small time asymptotics of the transition densities on the manifold. The geometric setup yields an intrinsic approach to the estimation of mean and covariance in non-linear spaces.
\end{abstract}

\section{Introduction}

Let $M$ be a smooth, connected, compact manifold of finite dimension. This paper is concerned with  stochastic modelling of data on $M$, a setting that arises in a range of applications, for example in medical imaging and computational anatomy \cite{grenander_computational_1998} where anatomical shapes reside in non-linear shape spaces.

In \cite{sommer_anisotropic_2015}, a class of probability distributions was introduced that generalizes the Euclidean normal distribution to the non-linear geometry on $M$. The aim is to allow statistical analysis of non-linear data by fitting the parameters of the distributions to data by a maximum likelihood approach. The method is intrinsic using development of stochastic processes in the frame bundle. In particular, it avoids the linearization of the manifold around a mean point that most non-linear statistical tools rely on.

In this paper, we will set forth the geometric foundation of the class of Brownian motions on $M$ and their transition distributions: we describe 
(1) the sub-Riemannian structure in the frame bundle of $M$ 
and the relation between the Fr\'echet mean on $M$ and the sub-Riemannian distance on $FM$; 
(2) the Eels-Elworthy-Malliavin construction of stochastic processes via the frame bundle; 
(3) a factorization through the bundle of symmetric positive definite matrices that represent infinitesimal covariance; 
(4) reduction to subbundles where the H\"ormander condition is satisfied and the density therefore smooth; 
(5) most probable paths for the generated stochastic processes; 
(6) small time asymptotics of the transition densities; and (7) applications in statistics.

\subsection{Statistical Background} \label{background}
In Euclidean space, it is natural to use the mean of a stochastic variable to describe the typical sample point, while the covariance captures the variation around this point.
However, the analogue for manifold valued stochastic variables is not clear. Many techniques use the Fr\'echet mean \cite{frechet_les_1948} as the natural definition of the mean point in $M$, see also the discussion \cite{pennec_intrinsic_2006}. Suppose $M$ is equipped with the Riemannian metric $g$, and let $d_g$ denote the associated geodesic distance function on $M$. Then the Fr\'echet mean of a stochastic variable $X$ on $M$ is a point $x_0 \in M$ that minimizes $\mathbb{E}d_g(x_0,X)^2$. If $\mathbb{E}d_g(x,X)^2$ is finite in one point $x\in M$, then such a minimizer exists, but it may not be unique. 
Given a set of data points $x_1,\dots, x_N$ on $M$, the Fr\'echet mean is usually estimated by a point $x_0\in M$ that realizes
\begin{equation}\label{squared}
  \textrm{argmin}_{x_0\in M}
 \sum_{i=1}^N d_g(x_0,x_i)^2
 \ .
\end{equation}

There are various attempts in the literature to define a notion of covariance or principal component analysis (PCA), see e.g. \cite{fletcher_statistics_2003,pennec_intrinsic_2006}. Typically, such approaches are based on a linearization of the manifold around the Fr\'echet mean, e.g. by using normal coordinates. However, if the covariance is not isotropic, it is arguably natural to replace  $d_g$ in \eqref{squared} by a distance measure that takes the anisotropy into account. Mimicking the Euclidean case, the covariance is usually represented by a symmetric positive definite matrix on $T_{x_0}M$ corresponding to an inner product. If the covariance is anisotropic, it is natural to measure distances in a metric that extends the inner product on $T_{x_0}M$ to all of $M$, rather than using $d_g$.
 
To enable this, we suggest  lifting the problem to the frame bundle. This requires that $M$ has a connection, typically the Levi-Civita connection associated with a Riemannian structure, and a fixed volume measure $\mu$. A point $u_0 $ in the frame bundle naturally defines an inner product $\sigma$ on $T_{x_0}M$, and given a connection in the frame bundle, there is a natural way of extending it to a (sub-Riemannian) metric on the whole frame bundle. This induces a (sub-Riemannian) distance function $d_{FM}$ on the frame bundle. We thus suggest replacing \eqref{squared} by
\begin{equation}\label{minformel}
\textrm{argmin}_{u_0=(x_0,\sigma)}  \sum_{i=1}^N \inf_{ \pi(u_i) = x_i} d_{FM} (u_0, u_i)^2
  -\frac{N}{2}\log(\mathrm{det}_g \sigma),
\end{equation}
where the infimum is taken over all points $u_i$ in the frame bundle projecting to $x_i$.  The last term assumes a Riemannian metric on $M$ and is inspired by the normalization factor in a Euclidean normal distribution. 
The advantage of this approach is that the minimizer encodes both the best descriptor $x_0\in M$ of the data and the covariance of the data represented by the inner product $\sigma$ on $T_{x_0}M$ that corresponds to the precision matrix, the inverse covariance. Informally, if $\sigma$ were kept fixed in \eqref{minformel}, a minimizing $x_0$ would correspond to the Fr\'echet mean with anisotropically weighted distance. However, while $\sigma$ can be parallel transported along curves in $M$, the curvature of the manifold generally prevents the notion of a globally fixed $\sigma$. We therefore need to minimize over  $x_0$ and $\sigma $ simultaneously in~\eqref{minformel}.

In \cite{sommer_diffusion_2014,sommer_anisotropic_2015}, it was suggested to describe manifold valued data by a parametric model that generalizes the normal distribution in Euclidean space using the fact that  normal distributions arise as transition distributions of Euclidean Brownian motions. The family of Euclidean Brownian motions can be naturally generalized to Brownian motions on $M$ through stochastic development in the frame bundle. Evaluating at a fixed time yields a distribution on $M$, which we will interpret as a normal distribution. The initial point and infinitesimal covariance of the process may be interpreted as the mean and covariance, respectively.
 
There is a close, though far from trivial or thoroughly understood, relation between distributions on $M$ obtained in this way and the sub-Riemannian distance function in the frame bundle. The non-Euclidean normal distribution will have a density under reasonable assumptions. As we show in this paper, on small time scales, maximizing the log-density corresponds to minimizing the squared sub-Riemannian distance. However, this relationship is weakened as the Brownian motion evolves in time.  

One of the main ideas of \cite{sommer_anisotropic_2015} was that, if we consider our data points as realizations of some  distribution on the manifold, then, rather than minimizing the squared length of paths as in \eqref{squared}, one should maximize the probability of a path. While it is not  a priori clear how this should be understood for a general stochastic variable on $M$, there is a way of defining it for a point that results from a stochastic process. The most probable paths are again closely connected to the sub-Riemannian distance. 

We aim at linking these ideas in this paper. We start in Section \ref{sec2} by discussing the geometry of the frame bundle, in particular the sub-Riemannian metric and the factorization of the frame bundle through the bundle of inner products. We then discuss holonomy, reachable sets, and existence of subbundles where the H\"ormander condition is satisfied.\footnote{Parts of section 2 (holonomy, reachable sets, and the H\"ormander condition) has previously been presented in a conference abstract in \cite{modin_proceedings_2015}.} In Section \ref{sec3}, we recall the Eels-Elworthy-Malliavin construction of Brownian motions in the frame bundle and the associated transition probabilities. In Section \ref{sec4} we identify the most probable paths for the processes. The small time asymptotics of the transition densities in $FM$ and $M$ are described in Section \ref{smalltime}. We conclude the paper by outlining how the theory can be applied in statistics.

\section{Sub-Riemannian Geometry in the Frame Bundle}\label{sec2}
In this section, we recall the definition of the frame bundle. We factor the projection of the frame bundle to the manifold through the bundle of symmetric positive definite covariant 2-tensors, which can be used to represent covariance at points in $M$ in a natural way. Moreover, we define a natural sub-Riemannian structure on the frame bundle and describe its geometry, in particular its holonomy.

\subsection{The Frame Bundle}\label{framedefsec} 
The \emph{frame bundle} $FM$ of a smooth $n$-dimensional manifold $M$ is a smooth vector bundle $FM \xrightarrow{\pi} M$ whose points  consist of a point $x\in M$ and a frame (ordered basis) $(u_1,\dots,u_n)$ for the tangent space $T_xM$. Alternatively, letting $e_1,\dots,e_n$ denote the standard basis in $\R^n$, we may think of a point in $FM$ as an isomorphism $u: \R^n \to T_xM$ where $u(e_i) = u_i$. The frame bundle thus has the structure of a principal bundle with fiber $\mathrm{GL}(n)$.

A \emph{connection} on $FM$ is a smooth splitting of its tangent bundle as $TFM \cong \mathcal{V} \oplus \mathcal{H}$, where $\mathcal{V}$ is the kernel of $\pi_* : TFM \to TM$, referred to as the \emph{vertical} bundle, and $\mathcal{H}$ is any complement, referred to as the \emph{horizontal} bundle. The restriction of $\pi_*$ to $\mathcal{H}_u$ yields a bijection onto $T_{\pi(u)}M$ whose inverse is called \emph{horizontal lifting} and will be denoted $h_u$. 

At any point $u=(u_1,\dots, u_n)\in FM$, there is a  basis $H_1,\dots,H_n$ for $\mathcal{H}_u$, where $H_i = h_u (u_i)$. The $H_i$ define horizontal vector fields. There is a natural correspondance between Euclidean paths $\gamma : [0,1] \to \R^n$ starting at the origin and horizontal paths ${\eta} : [0,1] \to FM$ with fixed initial pont $u_0\in FM$: If $\dot{\gamma}(t) = ( \alpha^1(t),\dots ,  \alpha^n(t))$, then $\eta$  is the  solution to the differential equation $\dot{\eta}(t) =  H_i(\eta(t))\  \alpha^i(t)$ with $\eta(0)=u_0$.
The map $\phi_{u_0}$ taking paths in  $\R^n$ to paths on $FM$ is known as the \emph{development map}.

If $M$ is Riemannian, the Levi-Civita connection naturally defines a horizontal subbundle of $FM$ as follows. Take any curve through $x\in M$ with initial velocity $v$. Parallel transport of the frame vectors $u_1,\dots,u_n\in T_xM$ along the curve yields a path in $FM$ starting at $u$. The derivative of this curve at zero is the horizontal lift $h_u(v)$ of $v$ to $T_uFM$. The set of all horizontal lifts forms the vector bundle $\mathcal{H}$.  Since parallel transport preserves inner products, the horizontal distribution is tangent to the \emph{orthonormal bundle} $OM$ consisting of isometries $u: \R^n \to T_xM$.

\subsection{The Bundle of Symmetric 2-tensors}   
\newcommand{\Symp}{\mathrm{Sym}^+}
Let $\Symp M$ denote the space of symmetric positive definite covariant 2-tensors on $M$. 
There is a natural factorization of the projection $\pi$ as 
\begin{equation*}
FM \xrightarrow{\Sigma} \Symp M \xrightarrow{q} M,
\end{equation*}
where $\Sigma$ maps $u\in FM $ to the inner product 
\begin{equation*}
\Sigma(u)(v,w)=\langle u^{-1}(v), u^{-1}(w)\rangle_{\R^n}, \qquad v,w\in T_{\pi(u)}M.
\end{equation*}	
	
If we choose coordinates $x=(x^1,\dots,x^n)$ on $M$, then there is a set of coordinates on $FM $ given by $x^1,\dots,x^n ,\alpha_i^j$ where $u_i=\alpha_i^j \partial_j$ (here $\partial_j=\frac{\partial}{\partial x^j } $ and we have used the Einstein summation notation). We denote the matrix with $ji$th entry $\alpha_i^j$ by $\alpha$. 
Moreover, there are coordinates on $\Symp M$ given at $\sigma \in \Symp M$  by the point $q(\sigma)=(x^1,\dots,x^n)\in M$ together with the coordinates $\sigma_{ij}=\sigma_{ji}=\sigma(\partial_i ,\partial_j )$. Letting $\beta = \alpha^{-1}$, we have that $u^{-1}(\partial_i)=\beta_i^k e_k$ and thus $\sigma_{ij}(\Sigma(u))=\sum_k \beta_i^k \beta_j^k = (\beta^T \beta)_{ij}$. 
Hence, in local coordinates $\Sigma(x,\alpha)=(x,(\alpha \alpha^T)^{-1})$.

By the polar decomposition  theorem, there is a diffeomorphism between $\Symp M$ and the quotient $ FM/\OO(n)$ where $\OO(n)$ acts on $FM$ from the right by $ (x,\alpha)\rho = (x,  \alpha \rho)$. The coset $(x, \alpha)\OO(n) \in  FM/\OO(n)$  corresponds to $(x,(\alpha \alpha^T)^{-1})\in \Symp M$. 
The horizontal distribution on $FM$ reduces to a distribution $\mathcal{H}^S$, which we also call horizontal, on $\Symp M$.
\begin{proposition}
There is a smooth splitting  $T \Symp M \cong \mathcal{H}^S \oplus \ker q_*$ such that $\Sigma_* \mathcal{H} = \mathcal{H}^S$. In particular, any vector $v\in T_{q(\sigma)} M$ has a unique horizontal  lift to $\mathcal{H}_\sigma^S$ which we denote $h^S_\sigma(v)$. 
\end{proposition}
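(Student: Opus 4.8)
The plan is to \emph{define} the horizontal distribution on $\Symp M$ by pushing the one on $FM$ forward along $\Sigma$: set $\mathcal{H}^S_\sigma := \Sigma_*(\mathcal{H}_u)$ for an arbitrary $u\in FM$ with $\Sigma(u)=\sigma$. I would then verify, in order, (i) that this is independent of the choice of $u$ in the fibre $\Sigma^{-1}(\sigma)$; (ii) that $\Sigma_*$ restricts to a linear isomorphism $\mathcal{H}_u\to\mathcal{H}^S_\sigma$, so $\dim\mathcal{H}^S_\sigma=n$; (iii) that $q_*$ carries $\mathcal{H}^S_\sigma$ isomorphically onto $T_{q(\sigma)}M$, which by a dimension count forces $T_\sigma\Symp M=\mathcal{H}^S_\sigma\oplus\ker q_*$; and (iv) that $\mathcal{H}^S$ is a smooth subbundle and $h^S$ the associated smooth lift.

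Step (i) is the crux. Here I would use the identification $\Symp M\cong FM/\OO(n)$ established just above, under which $\Sigma$ is the projection of a principal $\OO(n)$-bundle and in particular a surjective submersion. Since $\mathcal{H}$ is the horizontal distribution of a principal connection on $FM$, it is invariant under the right $\GL(n)$-action, hence under the right $\OO(n)$-action; writing $R_\rho(u)=u\rho$ for $\rho\in\OO(n)$, this means $(R_\rho)_*\mathcal{H}_u=\mathcal{H}_{u\rho}$. Together with $\Sigma\circ R_\rho=\Sigma$ this gives $\Sigma_*(\mathcal{H}_{u\rho})=(\Sigma\circ R_\rho)_*\mathcal{H}_u=\Sigma_*(\mathcal{H}_u)$, so any two representatives of the fibre yield the same subspace of $T_\sigma\Symp M$.

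For (ii)--(iii) the key algebraic fact is that $\pi=q\circ\Sigma$, so $\pi_*=q_*\circ\Sigma_*$. Hence $\ker(\Sigma_*)_u\subseteq\ker(\pi_*)_u=\mathcal{V}_u$, and since $\mathcal{H}_u\cap\mathcal{V}_u=0$ by definition of a connection, $\Sigma_*|_{\mathcal{H}_u}$ is injective; applied to $h_u(v)$ the same identity gives $q_*\bigl(\Sigma_* h_u(v)\bigr)=\pi_* h_u(v)=v$, so $q_*$ maps the $n$-dimensional space $\mathcal{H}^S_\sigma$ onto $T_{q(\sigma)}M$, hence isomorphically, whence $\mathcal{H}^S_\sigma\cap\ker q_*=0$. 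As $\dim\ker q_*=\dim\Symp M-n$, the dimensions add up and the splitting follows; the lift is then forced to be $h^S_\sigma(v)=\Sigma_* h_u(v)$, the unique element of $\mathcal{H}^S_\sigma$ projecting to $v$ under $q_*$. For (iv) I would take a smooth local section $s$ of the submersion $\Sigma$ near a given $\sigma_0$ and note that $\mathcal{H}^S$ is spanned there by the vector fields $\sigma\mapsto\Sigma_* h_{s(\sigma)}(\partial_i)$, which are smooth because $\mathcal{H}$, $h$, $s$ and $\Sigma_*$ are; hence $\mathcal{H}^S$ is a smooth rank-$n$ distribution and $h^S$ is smooth.

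I expect the only real obstacle to be the invariance in step (i): once the $\GL(n)$-equivariance of the connection and the polar-decomposition identification $\Symp M= FM/\OO(n)$ are in hand, the rest is the chain rule and dimension bookkeeping. A coordinate alternative would be to differentiate $\Sigma(x,\alpha)=(x,(\alpha\alpha^T)^{-1})$ along the horizontal fields $H_i$ and read off $\mathcal{H}^S$ explicitly, but this is messier and I would avoid it.
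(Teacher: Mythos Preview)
Your proof is correct and rests on the same idea as the paper's: well-definedness of $\mathcal{H}^S$ comes from the $\OO(n)$-equivariance of the horizontal distribution combined with the $\OO(n)$-invariance of $\Sigma$. The paper carries this out concretely rather than abstractly---it picks a curve $\gamma$ in $M$ with $\dot\gamma(0)=v$, parallel-lifts it to $u(t)$ and to $\tilde u(t)=u(t)\rho$, and then computes in coordinates that
\[
\Sigma_*(h_u(v))=\frac{\partial}{\partial t}\Big|_{t=0}\bigl(\gamma(t),(\alpha(t)\alpha(t)^T)^{-1}\bigr)=\Sigma_*(h_{\tilde u}(v)),
\]
i.e.\ it verifies directly that $\Sigma_*h_u(v)$ is independent of the representative $u$, whereas you invoke the principal-connection equivariance $(R_\rho)_*\mathcal{H}_u=\mathcal{H}_{u\rho}$ and the identity $\Sigma\circ R_\rho=\Sigma$ once and for all. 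Your steps (ii)--(iv) also spell out the dimension count for the splitting and the smoothness argument via a local section of $\Sigma$, which the paper compresses into the single phrase ``by standard principal bundle arguments''; in that sense your write-up is the more complete of the two.
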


\begin{proof}
We must show that for any vector $v \in T_xM$ there is a unique vector  $h^S_\sigma(v) \in T_\sigma\Symp M$ such that $\Sigma_*h_u(v) = h^S_\sigma(v)$ for all $u$ with $\Sigma(u)=\sigma$. But this is true because if $u,\tilde{u}\in FM$ with $u_i= \alpha_i^j \partial_j$ and $\tilde{u}_i=( \tilde{\alpha})_i^j \partial_j$ and $\Sigma(u)=\Sigma(\tilde{u})=\sigma$, then there is an orthogonal map $\rho \in \OO(n)$ with $\alpha \cdot \rho = \tilde{\alpha} $. Let $\gamma : [0,1] \to M$ be a path with $\dot{\gamma}(0)=v$ and let $u(t)=\alpha_i^j(t) \partial_j(\gamma(t))$ be a parallel lift with $u(0)=u$. Then $\tilde{u}(t)= u(t) \cdot \rho $ is a parallel lift of $\gamma$ with $\tilde{u}(0)=\tilde{u}$ and $\tilde{\alpha}(t)=\alpha(t) \cdot \rho$. Thus $h_u(v)=\frac{\partial}{\partial t}{u}(t)_{| t=0}$ while $h_{\tilde{u}}(v)=\frac{\partial}{\partial t}{ \tilde{u}}(t)_{| t=0}$.
But then 
\begin{equation*}
\Sigma_*(h_u(v)) =\frac{\partial}{\partial t}  \left(\gamma(t), \left(\alpha(t) \alpha(t)^T\right)^{-1}\right)_{|t=0} = \Sigma_*(h_{\tilde{u}}(v)).
\end{equation*}
By standard principal bundle arguments, the distribution $\mathcal{H}^S$  is smooth.
\end{proof}

\subsection{Sub-Riemannian Structure}
There exists a natural sub-Riemannian metric on the horizontal distribution on the frame bundle. Indeed,
for $v,w\in \mathcal{H}_u$, the inner product is given by
\begin{equation*}
g_u^{FM}(v,w)= \Sigma(u)(\pi_*v , \pi_*w ).
\end{equation*}
Alternatively, we may think of this sub-Riemannian structure as a map
$\tilde{g}^{FM}:\,TFM^*\rightarrow \mathcal{H} \subseteq TFM$  defined by
$
  g_u^{FM}(w,\tilde{g}^{FM}(\xi))
  =   (\xi|w)
$, $
  \forall w\in \mathcal{H}_u$.
The horizontal vector fields $H_i$ are orthonormal with respect to $g_u^{FM}$
\begin{equation*}
  g_u^{FM}(H_i,H_j)
  =
  \Sigma(u)(\pi_*v , \pi_*w )
  =
  \langle u^{-1}(u_i), u^{-1}(u_j)\rangle_{\R^n}
  =
  \langle e_i, e_j\rangle_{\R^n}
\end{equation*}
and they therefore constitute a global orthonormal frame for $\mathcal{H}$.
Note that $g_u$ is different from the induced metric of
bundle type \cite{montgomery_tour_2006}  that makes $\pi_*$ an isometry on $\mathcal{H}$ because $\pi_*g_u^{FM}$ depends on $u$. 

The sub-Riemannian length of an absolutely continuous path $\gamma : [0,1] \to FM$ whose derivative is a.e.\ horizontal is defined by 
\begin{equation*}
l(\gamma) = \int_0^1\sqrt{g_{\gamma(t)}^{FM}(\dot{\gamma}(t),\dot{\gamma}(t))}dt. 
\end{equation*}
If $\dot{\gamma}$ is not a.e.\ horizontal, we set
$l(\gamma)=\infty$. The sub-Riemannian distance between $u_1$ and $u_2$ in $FM$ is then
\begin{equation*}
d_{FM}(u_1,u_2) = \inf\{l(\gamma) \mid \gamma(0)=u_1, \gamma(1)=u_2\}. 
\end{equation*}
As in the Riemannian case, a length minimizing curve is called a \emph{geodesic}. 
Note that the sub-Riemannian distance between two points may be infinite, even if $M$ is connected, because there may be  points that cannot be joined by a horizontal path.

Similarly, there is a sub-Riemannian metric ${g}^{S}$ on $\mathcal{H}^S$   given on the horizontal vectors  $v,w\in T_\sigma \Symp M$ by
\begin{equation*}
{g}^{S}_\sigma(v,w) = \sigma(q_*v,q_*w).
\end{equation*}
This again leads to a notion of sub-Riemannian length and distance $d_{\Symp M}$ on $\Symp M$. Since $g^{FM}(v,w)={g}^{S}(\Sigma_*(v),\Sigma_*(w))$, the map $\Sigma$ takes a horizontal curve on $FM$ to a horizontal curve on $\Symp M$ of the same sub-Riemannian length. This implies that
\begin{equation*}
d_{FM}(u_0, \pi^{-1}(x)) = d_{\Symp M}(\Sigma(u_0),q^{-1}(x)),
\end{equation*}
where $x\in M$ and the distances are the minimal distances to the fibers of $FM$ and $\Symp M$, respectively.

\newcommand{\LieH}{\mathrm{Lie}(\mathcal{H})}
\newcommand{\Lie}{\mathrm{Lie}}

\subsection{The Reachable Set and Holonomy}
Not all points in $FM$ can be joined by a horizontal path. We therefore consider the \emph{reachable set} $Q(u)$ consisting of points that can be reached from $u$ by a horizontal path. For  $u,p\in FM$  we write $u\sim p$ if $u$ and $p$ can be joined by a horizontal curve in $FM$. Then 
\begin{equation*}
Q(u)=\{p\in FM\,|\,p\sim u\}.
\end{equation*}
The reachable set $Q(u)$ is a smooth immersed submanifold \cite{sussmann_orbits_1973}. We let $\piQu$ denote the restriction of $\pi$ to $\Qu$.

 The holonomy group $\mathrm{Hol}_u(FM,\mathcal{H})$ of $\mathcal{H}$ at $u\in FM$ is
\begin{equation*}
  \mathrm{Hol}_u(FM,\mathcal{H})
  =
  \{a\in\GL(T_x M)
    \, |\,
    a \cdot u\sim u
  \},
\end{equation*}
where $ \GL(T_xM) $ is the group of linear automorphisms on $T_xM$ and $a\in \GL(T_xM)$ acts  on the fibers of $FM$ by $ a\cdot u = a\circ u$ in the natural way.  We denote by $\mathrm{Hol}_u^0$ the connected component in $\mathrm{Hol}_u$ containing the identity. 
\begin{proposition}\label{subbundle}
  Let $M$ be Riemannian with corresponding connection $\mathcal{H}$. Fix $u\in FM$. Then $Q(u)$ is a principal subbundle of $FM$ with fibre $\mathrm{Hol}_u(FM,\mathcal{H})$.
\end{proposition}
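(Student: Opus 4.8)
The plan is to recognize Proposition~\ref{subbundle} as an instance of the classical holonomy reduction theorem for principal connections, and to prove it by combining three ingredients: the $\GL(n)$-equivariance of the Levi--Civita connection, a holonomy-loop argument identifying the fibres of $Q(u)$ with cosets of the holonomy group, and smooth dependence of horizontal lifts on the base curve for local triviality.

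\textbf{Step 1 (equivariance and set-up).} First I would record that the horizontal distribution $\mathcal{H}$ coming from the Levi--Civita connection is invariant under the right $\GL(n)$-action on $FM$: parallel transport along a curve acts linearly and independently on each frame vector, so if $c$ is the horizontal lift of a curve $\gamma$ through $u$, then $t\mapsto c(t)g$ is the horizontal lift of $\gamma$ through $ug$ for every fixed $g\in\GL(n)$. By uniqueness of horizontal lifts this gives $p\sim u \iff pg\sim ug$. Fixing $x=\pi(u)$, I identify $\mathrm{Hol}_u(FM,\mathcal{H})\subseteq\GL(T_xM)$ with the subgroup $G:=\{u^{-1}\circ a\circ u : a\in\mathrm{Hol}_u\}\subseteq\GL(n)$ via the isomorphism $u:\R^n\to T_xM$, using that $a\cdot u = u\cdot(u^{-1}au)$.

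\textbf{Step 2 (fibres of $Q(u)$ are single $G$-orbits).} Since $M$ is connected, horizontal lifting of paths emanating from $x$ shows $\pi(Q(u))=M$. Fix $y\in M$ and $p,q\in Q(u)\cap\pi^{-1}(y)$, and write $q=pg$ with $g\in\GL(n)$ uniquely determined. Pick horizontal paths $c_1$ from $u$ to $p$ and $c_2$ from $u$ to $q$, and let $\ell$ be the loop at $x$ formed by $\pi\circ c_1$ followed by the reverse of $\pi\circ c_2$. The horizontal lift of $\ell$ based at $u$ agrees with $c_1$ on the first half, and on the second half is the horizontal lift of $\overline{\pi\circ c_2}$ based at $p=qg^{-1}$, which by Step~1 and uniqueness ends at $ug^{-1}$. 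Hence $ug^{-1}\sim u$, which by definition of $\mathrm{Hol}_u$ means $g^{-1}\in G$, so $g\in G$. Conversely, if $p\in Q(u)$ and $g\in G$, write $g=u^{-1}au$ with $a\cdot u\sim u$; then $pg\sim ug = a\cdot u\sim u$ by Step~1, so $pg\in Q(u)$. Thus each fibre $Q(u)\cap\pi^{-1}(y)$ is a single right-$G$-orbit, on which the $\GL(n)$-action restricts to a free transitive $G$-action.

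\textbf{Step 3 (smooth structure, local triviality, and the obstacle).} By the cited orbit theorem, $Q(u)$ is a smooth immersed submanifold of $FM$, and $\piQu$ its restriction of $\pi$. The subgroup $G\cong\mathrm{Hol}_u$ is a Lie subgroup of $\GL(n)$: its identity component $G^0\cong\mathrm{Hol}_u^0$ is arcwise connected, hence a Lie subgroup (Yamabe), and $\mathrm{Hol}_u/\mathrm{Hol}_u^0$ is countable. For local triviality near $y\in M$, fix $q_0\in Q(u)\cap\pi^{-1}(y)$, a coordinate ball $U$ about $y$, and for $y'\in U$ let $s(y')$ be the endpoint of the horizontal lift, based at $q_0$, of the straight segment from $y$ to $y'$; smooth dependence of ODE solutions on initial conditions and parameters makes $s:U\to Q(u)$ a smooth section with $\pi\circ s=\mathrm{id}_U$, and by Step~2 the map $U\times G\to\piQu^{-1}(U)$, $(y',g)\mapsto s(y')g$, is a bijection. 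The delicate point — the main obstacle — is to check that this map is a diffeomorphism for the \emph{intrinsic} topologies, reconciling the immersed-submanifold structure on $Q(u)$ with the Lie-group topology on $\mathrm{Hol}_u$, which can be strictly finer than the subspace topology from $\GL(n)$; this is exactly the classical subtlety in the holonomy reduction theorem, while the bookkeeping in Steps~1--2 is routine. Granting this, $Q(u)$ is a principal subbundle of $FM$ with structure group $\cong\mathrm{Hol}_u(FM,\mathcal{H})$, as claimed.
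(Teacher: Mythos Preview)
Your outline is essentially a reconstruction of the classical holonomy reduction theorem (Kobayashi--Nomizu / Joyce), and Steps~1--2 are correct and standard. The paper's own proof is much shorter: it simply cites Joyce, Theorem~3.2.8, to conclude that the holonomy group of a Riemannian connection is a \emph{closed} Lie subgroup of $\GL(n)$ (in fact of $\OO(n)$), and then invokes the holonomy reduction theorem (Joyce, Theorem~2.3.6) directly.

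The substantive difference is in how the ``main obstacle'' you flag in Step~3 is handled. You correctly identify that matching the intrinsic (orbit) topology on $Q(u)$ with the Lie-group topology on $\mathrm{Hol}_u$ is the delicate point, but you then simply grant it. The paper's argument shows that this is exactly where the Riemannian hypothesis does its work: since parallel transport is an isometry, $\mathrm{Hol}_u\subseteq\OO(T_xM)$, and by the cited theorem it is \emph{closed}; hence the subspace and intrinsic Lie-group topologies on $\mathrm{Hol}_u$ coincide, and the subtlety evaporates. Your proof never uses the Riemannian assumption in any essential way (beyond naming the Levi--Civita connection), so as written it is an argument for the general holonomy reduction theorem with its hardest step left as an exercise. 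To close the gap, you should insert, at the start of Step~3, the observation that $\mathrm{Hol}_u$ is closed in $\OO(n)$ in the Riemannian case; the local trivialization $U\times G\to\piQu^{-1}(U)$ is then automatically a diffeomorphism for the subspace topologies, and no appeal to Yamabe or to the finer orbit topology is needed.
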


\begin{proof}
  Theorem 3.2.8 of \cite{joyce_compact_2000} asserts that the holonomy subgroup
  is closed because $M$ is Riemannian. The result then follows from Theorem 2.3.6 
  of \cite{joyce_compact_2000}.
\end{proof}
When $M$ is Riemannian,  the holonomy group
corresponds to the set of frames reachable by parallel transport along loops starting and ending at $\pi(u)$. Parallel transport preserves the Riemannian inner product, and $\mathrm{Hol}_u$ is isomorphic to a subgroup of $\OO(T_xM)$. Thus, if $u$ is orthonormal, then $Q(u)$ is a subbundle of the orthonormal frame bundle $O M$. If $M$ is also orientable, then $\mathrm{Hol}_u$ is a subgroup of $\SO(T_xM)$ because parallel transport preserves orientation. 

\subsection{The H\"{o}rmander Condition}
\label{sec:hormander}
A distribution $\mathcal{D}$ on a manifold $N$ is said to satisfy the \emph{H\"{o}rmander condition} if
it is \emph{bracket generating}, i.e.\ if $\Lie(\mathcal{D}) = TN$,
where $\Lie(\mathcal{D})$ denotes the Lie algebra generated by $\mathcal{D}$. 

Recall the decomposition  $T_uFM \cong \mathcal{H}_u\oplus \mathcal{V}_u$. The Lie algebra $\mathfrak{h}_u$ of $\mathrm{Hol}_u$ must be contained in  
$\mathcal{V}_u$. Moreover, the vertical part of $\LieH_u$ must be contained in $\mathfrak{h}_u$. 
Under the condition of Proposition \ref{subbundle}, $\LieH_u \subseteq  \mathfrak{h}_u\oplus \mathcal{H}_u = T_uQ(u)$, so any connected submanifold of $FM$ on which the H\"{o}rmander condition holds must be contained in $Q(u)$. There are several reasons why we are interested in whether the H\"{o}rmander condition satisfied on $Q(u_0)$. Some are listed in the next proposition.
\begin{proposition}\label{metricprop}
Suppose $\mathcal{H}$ satisfies the H\"{o}rmander condition on $Q(u_0)$. If $Q(u_0)$ has closed fibers, then $Q(u_0)$ is a closed subbundle of $FM$. In this case
\begin{itemize}
\item[(i)] $d_{FM}$ is continuous on $Q(u_0)$.
\item[(ii)] $Q(u_0)$ with the metric $d_{FM}$ is complete as a metric space.
\item[(iii)] any two points in $Q(u_0)$ can be joined by a minimal geodesic.
\end{itemize}
\end{proposition}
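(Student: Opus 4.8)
The plan is to establish the structural claim about $Q(u_0)$ and then to deduce (i)--(iii) from two classical facts of sub-Riemannian geometry, both available in \cite{montgomery_tour_2006}: the Chow-Rashevskii theorem (a bracket-generating distribution on a connected manifold gives a finite distance that induces the manifold topology) and the sub-Riemannian Hopf-Rinow theorem (completeness of the distance implies that closed bounded sets are compact and that any two points are joined by a minimizing geodesic). First I would argue that $Q(u_0)$ is an embedded closed subbundle. By Proposition \ref{subbundle} (for the Levi-Civita connection; in general by Sussmann's orbit theorem \cite{sussmann_orbits_1973} together with the reduction theorem for holonomy bundles) $Q(u_0)$ is a principal subbundle with structure group $\mathrm{Hol}_{u_0}$, and $\pi(Q(u_0))=M$ since every point of the connected manifold $M$ is joined to $\pi(u_0)$ by a curve whose horizontal lift from $u_0$ ends over that point. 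The closed-fibre hypothesis means $\mathrm{Hol}_{u_0}$ is a closed subgroup of $\GL(T_{\pi(u_0)}M)$, hence an embedded Lie subgroup by Cartan's theorem; over a trivialising neighbourhood $U\subseteq M$ a local section identifies $Q(u_0)\cap\pi^{-1}(U)$ with $\{(y,c(y)h) : y\in U,\ h\in\mathrm{Hol}_{u_0}\}$, which is closed in $U\times\GL(n)$. Hence $Q(u_0)$ is embedded and closed in $FM$, and in particular its subspace and intrinsic topologies coincide.

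For (i), note that $\mathcal{H}$ is tangent to $Q(u_0)$ (a horizontal curve through a point of $Q(u_0)$ stays in $Q(u_0)$), so $(Q(u_0),\mathcal{H}|_{Q(u_0)},g^{FM})$ is a sub-Riemannian manifold; it is connected, being horizontally path-connected to $u_0$, and by hypothesis $\mathcal{H}|_{Q(u_0)}$ is bracket generating on it. Since any horizontal curve between two points of $Q(u_0)$ lies in $Q(u_0)$ with the same sub-Riemannian length whether measured in $FM$ or in $Q(u_0)$, the restriction of $d_{FM}$ to $Q(u_0)$ equals the intrinsic sub-Riemannian distance of $Q(u_0)$. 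Chow-Rashevskii then gives that this distance is finite and induces the manifold topology of $Q(u_0)$, which is the subspace topology by the first paragraph; this is exactly (i).

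For (ii), in the Riemannian case parallel transport is by $g$-isometries, so $Q(u_0)$ is contained in a compact subbundle of $FM$ (the frames $v$ with $\Sigma(v)$ in the holonomy orbit of $\Sigma(u_0)$ transported along paths); being closed in a compact space, $Q(u_0)$ is compact, hence complete. For a general connection I would instead use that, $M$ being compact, any horizontal curve of length at most $L$ starting at $u_0$ remains in a compact set $K_L\subseteq FM$, by a Gr\"{o}nwall estimate for the fibre component along the projected curve in a finite coordinate cover of $M$. Since $Q(u_0)$ is closed, $d_{FM}$-bounded subsets of $Q(u_0)$ lie in a compact subset of $Q(u_0)$, so a $d_{FM}$-Cauchy sequence is bounded and has a subsequence converging in $FM$ to a point $p\in Q(u_0)$; by the continuity of (i) the whole sequence $d_{FM}$-converges to $p$. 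This proves (ii) and shows that closed bounded subsets of $(Q(u_0),d_{FM})$ are compact, from which (iii) follows by the sub-Riemannian Hopf-Rinow theorem: a length-minimizing sequence of horizontal curves between two fixed points may be taken in a fixed compact set and, after arclength reparametrisation and passing to a subsequence (Arzel\`{a}-Ascoli), converges uniformly to a horizontal curve whose length realises the distance by lower semicontinuity.

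I expect the main obstacle to be conceptual rather than computational: a priori $Q(u_0)$ is only an immersed submanifold, on which $d_{FM}$ may fail to be continuous or to induce the ambient topology, and the theorems of Chow and Hopf-Rinow apply to it only as an abstract manifold. The closed-fibre hypothesis is precisely what promotes $Q(u_0)$ to an embedded closed subbundle, reconciling the intrinsic and subspace topologies and --- via compactness of $M$ --- supplying the properness needed for completeness; once this is in place, (i)--(iii) are the standard consequences of bracket generation.
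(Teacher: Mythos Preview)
Your argument is correct and follows the same overall architecture as the paper's, but with considerably more detail. The paper's proof is terse: it cites \cite[Thm~2.3.6]{joyce_compact_2000} for the closed-subbundle claim (which your first paragraph essentially unpacks via Cartan's closed-subgroup theorem), invokes the ball-box theorem \cite[Thm~2.10]{montgomery_tour_2006} for (i) (your Chow--Rashevskii argument yields the same conclusion, since both give that $d_{FM}$ induces the manifold topology on $Q(u_0)$), and deduces (iii) from (ii) via \cite[Lem~D.9]{montgomery_tour_2006}, which is the sub-Riemannian Hopf--Rinow statement you spell out.

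The one genuine methodological difference is in (ii). The paper obtains completeness by extending the sub-Riemannian metric on $\mathcal{H}$ to a full Riemannian metric on $TQ(u_0)$, so that a $d_{FM}$-Cauchy sequence is Cauchy for a Riemannian distance lying below $d_{FM}$; completeness of the latter then gives a limit, and (i) upgrades this to $d_{FM}$-convergence. You instead argue compactness of $Q(u_0)$ directly: in the Riemannian case the fibres sit in an $\OO(n)$-orbit (since parallel transport preserves the Gram matrix $g(u_i,u_j)$), hence are compact, and $M$ is compact. Your route is arguably cleaner in this case, since once $Q(u_0)$ is compact and $d_{FM}$ induces its topology, both (ii) and (iii) are immediate without any appeal to Hopf--Rinow. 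The paper's extension trick, on the other hand, is the standard device when compactness is not available and one only has closedness; it is worth knowing both.
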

\begin{proof}
The first claim is \cite[Thm 2.3.6]{joyce_compact_2000}. (i) follows for instance from the ball-box theorem \cite[Thm 2.10]{montgomery_tour_2006}. (ii) follows because we can extend the sub-Riemnnian metric to a metric on all of $Q(u_0)$. Finally (ii) implies (iii) according to \cite[Lem D.9]{montgomery_tour_2006}.
\end{proof}
Below, we give conditions under which the H\"{o}rmander condition is indeed satisfied on $Q(u)$.
The
discussion is based on
\cite{bryant_survey_1987,joyce_compact_2000,montgomery_tour_2006,sussmann_orbits_1973}.

Suppose $M$ is Riemannian. Injectivity of the curvature tensor $R_x:\Lambda^2(T_xM)\rightarrow \mathfrak{so}(T_xM)$ implies  surjectivity due to the dimensions of $\Lambda^2(T_xM)$ and $\mathfrak{so}(T_xM)$. 
In this situation, the H\"ormander condition is
satisfied on the subbundle $Q(u)$. Such injective curvature metrics 
are generic, i.e. they form an open and dense subset of all metrics on $M$
\cite{bryant_survey_1987}.  

\begin{theorem}
  If $M$ is Riemannian and the curvature map is injective in every point, then the horizontal distribution is bracket generating on $Q(u)$ and $\mathrm{Hol}_u^0=\mathrm{SO}(T_xM)$.
  \label{thm:surjective-bracket-generating}
\end{theorem}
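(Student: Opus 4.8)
The plan is to read the bracket-generating condition off the structure equations of the Levi–Civita connection and then invoke the Ambrose–Singer holonomy theorem for the identification of $\mathrm{Hol}_u^0$. First I would work on the orthonormal bundle $OM$, which by Proposition \ref{subbundle} is where $Q(u)$ lives when $u$ is orthonormal, so $\mathcal{V}_u\cap T_uQ(u)$ is the Lie algebra $\mathfrak{h}_u\subseteq\mathfrak{so}(T_xM)$. Recall that in a local orthonormal frame the canonical horizontal vector fields $H_1,\dots,H_n$ together with the fundamental vertical fields $V_{ij}$ (generators of the $\mathrm{SO}(n)$-action) satisfy structure equations of the schematic form
\begin{equation*}
 [H_i,H_j] = -\,V_{\Omega(H_i,H_j)} \pmod{\mathcal{H}},
\end{equation*}
where $\Omega$ is the curvature $2$-form; pulled back along a frame this is exactly the Riemann curvature tensor $R_x$ acting as a map $\Lambda^2(T_xM)\to\mathfrak{so}(T_xM)$. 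Hence the vertical components of the first brackets $[H_i,H_j]$ span precisely the image of $R_x$ inside $\mathcal{V}_u$.

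The second step is the dimension count already flagged in the text: $\dim\Lambda^2(T_xM)=\binom n2=\dim\mathfrak{so}(T_xM)$, so injectivity of $R_x$ forces surjectivity, i.e. the first brackets of the $H_i$ already span all of $\mathcal{V}_u$. Combined with the fact that the $H_i$ span $\mathcal{H}_u$, this gives $\mathcal{H}_u+[\mathcal{H},\mathcal{H}]_u = \mathcal{H}_u\oplus\mathcal{V}_u = T_uFM$. Since this holds at every point, $\mathrm{Lie}(\mathcal{H}) = TFM$ along $Q(u)$; restricting to the submanifold $Q(u)$, whose tangent space is $\mathfrak{h}_u\oplus\mathcal{H}_u$ with $\mathfrak{h}_u=\mathcal{V}_u$ in this case, we get $\mathrm{Lie}(\mathcal{H})=TQ(u)$, which is the H\"ormander condition on $Q(u)$. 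For the holonomy statement I would appeal to the Ambrose–Singer theorem: $\mathfrak{h}_u$ is generated by the curvature endomorphisms $R_x(v\wedge w)$ (parallel-transported around, but this does not enlarge the span here), so surjectivity of $R_x$ gives $\mathfrak{h}_u=\mathfrak{so}(T_xM)$, hence $\mathrm{Hol}_u^0=\mathrm{SO}(T_xM)$ since $M$ is connected and, being Riemannian and (implicitly) orientable, $\mathrm{Hol}_u\subseteq\mathrm{SO}(T_xM)$.

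The main obstacle is bookkeeping the structure equations carefully enough to be sure that it is the \emph{vertical} part of $[H_i,H_j]$ that realizes the curvature map, and that no higher brackets are needed — i.e. that $\mathfrak{h}_u$ is already spanned by the zeroth-order curvature terms rather than their covariant derivatives. This is where Ambrose–Singer does the real work: a priori $\mathfrak{h}_u$ is the span of $R_x(v\wedge w)$ together with all parallel translates $P_\gamma^{-1}R_y(\cdot\wedge\cdot)P_\gamma$, but once $R_x$ is already onto $\mathfrak{so}(T_xM)$ at the single point $x=\pi(u)$, the span cannot grow any further, so the first brackets suffice. A minor secondary point is to note that the orientability needed to land inside $\mathrm{SO}$ rather than $\mathrm{O}$ is exactly what lets us write $\mathrm{Hol}_u^0=\mathrm{SO}(T_xM)$ as an equality of the identity component; if $M$ is not orientable one still gets $\mathrm{Hol}_u^0=\mathrm{SO}(T_xM)$ since the identity component is in any case contained in $\mathrm{SO}(T_xM)$, so the statement is unaffected.
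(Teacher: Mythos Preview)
Your argument is essentially the paper's: both identify the vertical part of $[H_i,H_j]$ with the curvature via $R(V,W)=[V,W]-h_u([\pi_*V,\pi_*W])$, invoke the dimension count to turn injectivity of $R_x$ into surjectivity onto $\mathfrak{so}(T_xM)$, and conclude that $\mathcal{H}+[\mathcal{H},\mathcal{H}]$ already fills $T_uQ(u)=\mathcal{H}_u\oplus\mathfrak{so}(T_xM)$; the paper reads off $\mathrm{Hol}_u^0=\mathrm{SO}(T_xM)$ from the resulting sandwich $\mathrm{Lie}(\mathcal{H})_u\subseteq T_uQ(u)\subseteq\mathcal{H}_u\oplus\mathfrak{so}(T_xM)$ rather than citing Ambrose--Singer, but that is a cosmetic difference. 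One slip to fix: your line $\mathcal{H}_u+[\mathcal{H},\mathcal{H}]_u=\mathcal{H}_u\oplus\mathcal{V}_u=T_uFM$ is false in $FM$ (there $\mathcal{V}_u\cong\mathfrak{gl}(n)$ has dimension $n^2$, strictly larger than $\mathfrak{so}$), so the equality you want is on $Q(u)$ (or $OM$), which is indeed where both you and the paper are really working.
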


\begin{proof}
Since $\LieH_u \subseteq T_uQ(u) \subseteq \mathcal{H}_u \oplus \mathfrak{so}(T_xM)$, it suffices to show that $\LieH_u = \mathcal{H}_u \oplus \mathfrak{so}(T_xM)$. For this, it is enough that the span of $\mathcal{H}$ and its first bracket equals $\mathcal{H}\oplus \mathfrak{so}(T_xM)$,
  i.e. $\mathcal{H}\oplus [\mathcal{H},\mathcal{H}]=\mathcal{H}\oplus \mathfrak{so}(T_xM)$. Thus, let 
  $z=z_v+z_h \in \mathfrak{so}(T_xM)\oplus \mathcal{H}_u$. 
 By assumption, $R$ is surjective onto $\mathfrak{so}(T_xM)$  so we can find horizontal vector fields
  $V,W$ such that $R(V,W)=z_v$. Since
  $R(V,W)=[V,W]-h_u([\pi_*(V),\pi_*(V)])$, we have 
  $z=z_v+z_h=[V,W]-h_u([\pi_*(V),\pi_*(V)])+z_h$. The first term lies in 
  $[\mathcal{H},\mathcal{H}]$ and the two last terms belong to $\mathcal{H}$.  
\end{proof}

When $R$ is not injective, it is still possible that $Q(u)$ satisfies H\"{o}rmander's condition in some non-degenerate situations:
\begin{theorem}
  If $ \LieH_u$ has constant rank for all $u\in FM$, then $Q(u)$ satisfies the H\"ormander condition.
    \label{thm:constant-dim-generating}
\end{theorem}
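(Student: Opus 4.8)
The plan is to use the orbit theorem machinery (Sussmann, Stefan) together with the constant-rank hypothesis to show that the Lie-algebra generated distribution $\LieH$ is an integrable subbundle whose integral manifold through $u$ is exactly $Q(u)$; then, since $Q(u)$ is a manifold and $\LieH_u = T_uQ(u)$ at the base point $u$ by the orbit theorem, $\mathcal H$ is bracket generating on $Q(u)$ by definition.

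First I would set $\mathcal{E}_u := \LieH_u \subseteq T_uFM$ for each $u$. By assumption $\dim \mathcal{E}_u$ is constant in $u$, so $\mathcal{E}$ is a smooth distribution on $FM$ (smoothness because it is spanned locally by iterated brackets of the smooth horizontal frame $H_1,\dots,H_n$, and a constant-rank family spanned by smooth vector fields is a smooth subbundle). Next, $\mathcal{E}$ is involutive: for any two vector fields $X,Y$ taking values in $\mathcal{E}$, their bracket again takes values in $\LieH$ pointwise, hence in $\mathcal{E}$; this is immediate from the definition of the Lie algebra generated by $\mathcal H$ once we know the pointwise evaluations assemble into a genuine distribution, which the constant-rank assumption guarantees. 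By the Frobenius theorem, $\mathcal{E}$ is therefore integrable, with a maximal integral manifold $L_u$ through each $u$, and $T_p L_u = \mathcal{E}_p$ for all $p\in L_u$.

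The key step is to identify $L_u$ with $Q(u)$. On one hand $\mathcal H \subseteq \mathcal{E}$, so every horizontal curve starting at $u$ stays in the integral manifold $L_u$; hence $Q(u)\subseteq L_u$. On the other hand, by the Chow–Rashevskii theorem applied \emph{inside} $L_u$ — where $\mathcal H$ is bracket generating by construction, since $\LieH$ restricted to $L_u$ spans $TL_u=\mathcal{E}$ — any two points of $L_u$ are joined by a horizontal curve, so $L_u \subseteq Q(u)$. Thus $Q(u)=L_u$ and in particular $T_uQ(u)=\mathcal{E}_u=\LieH_u$, which is precisely the statement that $\mathcal H$ satisfies the H\"ormander condition on $Q(u)$. (Alternatively one can phrase this whole argument through the Sussmann orbit theorem: under the constant-rank hypothesis the orbit of the horizontal vector fields through $u$ has tangent space $\LieH_u$ at every point, and the orbit is exactly $Q(u)$.)

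The main obstacle is the constant-rank/smoothness bookkeeping: without the hypothesis, $\dim\LieH_u$ can jump, $\mathcal{E}$ fails to be a subbundle, Frobenius does not apply, and $Q(u)$ need not be an integral manifold of $\LieH$ — indeed its tangent space at non-generic points can be strictly larger than $\LieH_u$. So the real content is verifying that the constant-rank assumption upgrades the pointwise-defined $\LieH$ to a smooth involutive distribution so that Frobenius (or the orbit theorem in its clean form) can be invoked; once that is in place, the chain $Q(u)\subseteq L_u$ and $L_u\subseteq Q(u)$ closes the argument with only the standard Chow–Rashevskii input.
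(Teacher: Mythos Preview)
Your proposal is correct and follows essentially the same route as the paper: constant rank plus involutivity of $\LieH$ allows Frobenius to produce a maximal integral leaf $L_u$, and then the two inclusions $Q(u)\subseteq L_u$ (horizontal curves stay in the leaf) and $L_u\subseteq Q(u)$ (Chow--Rashevskii inside $L_u$) identify $Q(u)$ with $L_u$. The only point the paper adds that you leave implicit is a separate check, via \cite{sussmann_orbits_1973} and a dimension count $\dim\LieH_u=\dim\mathrm{Hol}_u$, that the set-theoretic equality $Q(u)=L_u$ upgrades to an equality of immersed submanifolds; your parenthetical appeal to the orbit theorem covers this, but it is worth making explicit since $Q(u)$ carries its Sussmann-orbit manifold structure a priori and not the Frobenius one.
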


The constant rank condition is for instance satisfied for analytic manifolds \cite[Appendix C]{montgomery_tour_2006}  and homogeneous spaces.

\begin{proof}
  The distribution $\LieH$ is involutive by definition. The constant rank ensures that the Frobenius theorem (see \cite{michor_topics_2008} Theorem 3.20) applies. Thus for any $u\in FM$ there exists a maximal connected immersed submanifold $Q_{\LieH}(u)$ containing $u$ of dimension $\dim \LieH$ with tangent space $\LieH$. By construction, the H\"ormander condition is satisfied on $Q_{\LieH}(u)$.

Chow's theorem \cite[Theorem 2.2]{montgomery_tour_2006} yields that $Q_{\LieH}(u)
\subseteq Q(u)$. On the other hand, any two points in $Q(u)$ can be joined by a
horizontal curve. By the construction of $Q_{\LieH}(u)$, this curve must lie in
$Q_{\LieH}(u)$. We deduce that $Q(u)$ and $Q_{\LieH}(u)$ are equal as sets. Moreover, it follows from \cite[Exercise C.4]{montgomery_tour_2006}, see also \cite{sussmann_orbits_1973}, that $\dim \LieH_u =
\dim \mathrm{Hol}_u$, so $Q(u)=Q_{\LieH}(u)$ as differentiable
manifolds.
\end{proof}

In general, however, $\LieH$ may not have constant dimension. In this case, it is not possible to find a submanifold of $FM$ where $\mathcal{H}$ is bracket generating. For instance, if $M$ is flat in a neighborhood of $\pi(u)$ then $\dim \LieH_u = n$, while the dimension  of $\LieH$ will be larger in curved parts of $M$.

\subsection{The Hamilton-Jacobi Equations}
A class of geodesics for the sub-Riemannian metric on $FM$, called the \emph{normal geodesics} can be computed by solving an ordinary differential equation on $T^*Q(u_0)$. Recall that the sub-Riemannian metric can be written as a cometric $\tilde{g}^{FM}:\,T^*Q(u_0)  \rightarrow \mathcal{H} \subset TQ(u_0)$. 
For $q\in Q(u_0)$ and $p\in T^*_qQ(u_0)$, we define the Hamiltonian  
\begin{equation*}
H(q,p)=\frac{1}{2} p(\tilde{g}^{FM}(p)).
\end{equation*}
The Hamiltonian differential equations in canonical coordinates $(q_i,p_i)$ on $T^*Q(u_0)$ look as follows
\begin{equation*}
\dot{q}^i= \frac{\partial H}{\partial p_i}(q,p), \qquad \dot{p}_i=- \frac{\partial H}{\partial q^i}(q,p).
\end{equation*}
If the H\"{o}rmander condition is satisfied on $Q(u_0)$, then the projection of any solution is a (locally) length minimizing geodesic for the sub-Riemannian structure, called a \emph{normal geodesic}. 
However, contrary to ordinary Riemannian geometry, there exist length minimizing
geodesics that do not arise in this way.  See e.g.\ \cite{montgomery_tour_2006} for details.

There is a sub-Riemannian version of the exponential map 
\begin{equation*}
\exp_{u_0}: T^*_{u_0} Q(u_0)\cap H^{-1}\left(\tfrac{1}{2}\right) \to Q(u_0) 
\end{equation*}
that takes an initial covector $p_0\in T^*_{u_0}Q(u_0)$ to the endpoint of the geodesic that results from projecting the solution to the Hamiltonian equations with initial condition $(u_0,p_0)$. However, the exponential map is not a local diffeomorphism around 0 and may not be surjective.

In a similar way, if we choose to work with geodesics on $\Symp M$,  there are Hamiltonian equations for the normal geodesics. This has the practical advantage of reducing the number of equations. Again, there is no guarantee that all geodesics are normal.
An expression in local coordinates of the sub-Riemannian Hamiltonian equations on $FM$ for computational purposes can be found in \cite{sommer_evolution_2015}.

\section{Brownian Motion and Stochastic Development}\label{sec3}
The Brownian motion $W_t$ on Euclidean space starting from $\mu \in \R^n$ and having covariance matrix $\Sigma$ is an almost surely continuous stochastic process with stationary independent increments whose distribution at time $t$ satisfies 
\begin{equation*}
W_t \sim N_n(\mu, t\Sigma)\ . 
\end{equation*}
The Brownian motion can be generalized to any manifold $M$ with connection through the process of \emph{stochastic development} that allows any Euclidean semi-martingale to be mapped uniquely to a corresponding process in $FM$ given a starting point $u_0\in FM$ \cite{hsu_stochastic_2002}. Developing the standard Brownian motion in $FM$ and projecting to $M$ yields a stochastic process on $M$. 
Evaluated at a fixed time $t$, its distribution can be regarded a generalization of the normal distribution to $M$.
If $u_0$ is orthonormal, the resulting process is what is normally called the Brownian motion on $M$. If $u_0$ is non-orthonormal, the resulting distribution can be viewed as a Brownian motion on $M$ with anisotropic covariance matrix \cite{sommer_anisotropic_2015}. 

Below, we make this construction concrete, we discuss the reduction to $\Symp M$, where the distribution on $M$ depends uniquely on the initial value, and the restriction to the subbundles  $Q(u_0)$ where the transition densities are smooth.

\subsection{Brownian Motion in the Frame Bundle}
Given a manifold $M$ with connection, there is a stochastic version of the development map from Section \ref{framedefsec}. 
Given a stochastic process $W_t$ in $\mathbb{R}^n$ starting at $0$, the
stochastic development of $W_t$ in $FM$ starting at $u_0\in FM$ is a 
stochastic process $U_t$ on $FM$ solving the Stratonovich stochastic
differential equation 
\begin{equation}\label{mainSDE}
dU_t=H_i(U_t)\circ dW_t^i
\end{equation}
with initial condition $U_0=u_0\in FM$, see e.g. \cite{hsu_stochastic_2002}.
The process $U_t$ is a diffusion on $FM$, and the projection $X_t=\pi U_t$  onto $M$ yields a stochastic process on $M$.
 We denote the stochastic development map $W_t \mapsto X_t$ by $\tilde{\phi}_{u_0}$.

Let $M$ be equipped with the Riemannian metric $g$. Since parallel transport
preserves the Riemannian inner product, the horizontal vector fields are tangent
to the submanifolds of $FM$ where the coordinates $g(u_i,u_j)$ of the metric $g$ in the
frame $u$ are constantly equal those of $u_0$. It follows that any solution to  $dU_t = H_i(U_t)\circ dW^i_t$ with initial condition $u_0$ will stay inside these submanifolds
according to \cite[7.23]{emery_stochastic_1989}.
In particular, if $u_0$ is orthonormal, then  $g_{u_0}(u_i,u_j)=\delta_{ij}$ and hence $U_t$ will stay inside the orthonormal bundle $OM$.

If $u_0$ is orthonormal and $W_t$ is a standard Brownian motion, then $X_t$ is the usual Brownian motion on $M$ \cite{hsu_stochastic_2002}.
If $W_t$ is a standard Brownian motion but $u_0$ is not orthonormal, then we may think of $X_t$ as a generalized Brownian motion on $M$. Hence, the distribution of $X_t$ at time $t=1$ can be thought of as the analogue of a normal distribution on $M$. The starting point $x_0 = \pi(u_0)$ can naturally be interpreted as the mean of the distribution, while the symmetric bilinear positive definite map $\Sigma(u_0)$ on $T_{x_0}M$ corresponds to the precision matrix, i.e. the inverse of the covariance matrix. The covariance will be anisotropic if the starting frame $u_0$ is not orthonormal.  As we shall see below, the distribution of $X_t$ is uniquely determined by $\Sigma(u_0)$. 

\subsection{Reduction of Initial Conditions}
The $n$ horizontal vector fields $H_i$ on $FM$ define a second order differential operator on $FM$ by 
\begin{equation}
L=\frac{1}{2}\sum_{i=1}^n H_i^2. 
\end{equation}
The Brownian motion $U_t$ on $FM$ is an $L$-diffusion in the sense of \cite{hsu_stochastic_2002}, meaning that for any smooth $f: FM \to \R$,
\begin{equation}\label{Ldiff}
f(U_t) - f(U_0) - \int_0^t L(f)(U_s) ds  
\end{equation}
is a local martingale.

The restriction of $L$  to $OM$ descends to  the Laplace-Beltrami operator $\Delta_g$ on $M$, i.e. if $f:M\to \R$ is a smooth function then $L(f\circ \pi) = \Delta_g(f)$. Hence the standard Brownian motion can be intrinsically defined on $M$ as a $\Delta_g$-diffusion.  In general, $L$ does not reduce to an operator on $M$, so the generalized Brownian motions are not intrinsically defined. We can, however, reduce it to an operator on $\Symp M$. 
The map $\Sigma : FM \to \Symp M$ maps $U_t$ to a stochastic process $Y_t$ on $\Symp M$.  We will show that there exists a (degenerate) elliptic operator $\tilde{L}$ on $\Symp M$ such that $Y_t$ is an $\tilde{L}$-diffusion.

\begin{proposition}
There is a (degenerate) elliptic operator $\tilde{L}$ on $\Symp M$ that satisfies $L(f\circ \Sigma) =\tilde{L}(f)$ for any function $f$ on $\Symp M$. In particular, $Y_t$ is an $\tilde{L}$-diffusion.
\end{proposition}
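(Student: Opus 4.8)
The plan is to show that the second order operator $L=\frac12\sum_i H_i^2$ on $FM$ descends along $\Sigma:FM\to\Symp M$, i.e.\ that for any smooth $f$ on $\Symp M$ the function $L(f\circ\Sigma)$ is again of the form $g\circ\Sigma$ for a smooth function $g$ on $\Symp M$, which we then call $\tilde L(f)$. The $\tilde L$-diffusion property of $Y_t=\Sigma(U_t)$ then follows immediately: composing the local martingale \eqref{Ldiff} for $f\circ\Sigma$ with the chain rule identity gives that $f(Y_t)-f(Y_0)-\int_0^t\tilde L(f)(Y_s)\,ds$ is a local martingale, which is exactly the statement that $Y_t$ is an $\tilde L$-diffusion in the sense of \cite{hsu_stochastic_2002}.

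First I would use the right $\OO(n)$-action on $FM$, under which $\Symp M\cong FM/\OO(n)$ by the polar decomposition discussion above. The key observation is that $L$ is $\OO(n)$-invariant: since the horizontal vector fields transform under $R_{\rho*}$, $\rho\in\OO(n)$, by $R_{\rho*}H_i = (\rho^{-1})^j_i H_j = \rho^j_i H_j$ (the frame being reindexed by the orthogonal matrix), the sum $\sum_i H_i^2$ is carried to $\sum_{i,j,k}\rho^j_i\rho^k_i H_jH_k = \sum_{j,k}\delta_{jk}H_jH_k = \sum_j H_j^2$ because $\rho$ is orthogonal. Hence $L$ commutes with $R_\rho^*$ for every $\rho\in\OO(n)$. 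A function of the form $f\circ\Sigma$ is precisely an $\OO(n)$-invariant function on $FM$, so $L(f\circ\Sigma)$ is again $\OO(n)$-invariant and therefore descends to a function $\tilde L(f)$ on the quotient $\Symp M$; smoothness of $\tilde L(f)$ follows since $\Sigma$ is a smooth submersion (a principal $\OO(n)$-bundle projection). That $\tilde L$ is a second order differential operator, and degenerate elliptic with the claimed symbol, is then checked in the coordinates $(x^i,\sigma_{ij})$ on $\Symp M$ introduced earlier: write $H_i$ in the $(x,\alpha)$ coordinates, push the principal symbol $\frac12\sum_i H_i\otimes H_i$ forward by $\Sigma_*$ and observe that, by the previous proposition, $\Sigma_*H_i=h^S_\sigma(u_i)$ depends only on $\sigma=\Sigma(u)$, so the symbol of $\tilde L$ is $\frac12\sum_i h^S_\sigma(u_i)\otimes h^S_\sigma(u_i)$ — independent of the choice of $u$ in the fibre — and nonnegative, hence $\tilde L$ is degenerate elliptic.

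The main obstacle is to make the descent of the \emph{second order} part fully rigorous: a priori $L(f\circ\Sigma)$ could depend on how $f$ is extended off $\Symp M$ or could fail to be expressible purely in terms of $\Sigma$-data, and one must rule out that the vertical (fibre) directions contribute. This is handled by the $\OO(n)$-invariance argument above, but one should also verify that the zeroth/first order ``drift'' terms that appear when commuting $H_i$ past $\Sigma$ are themselves $\OO(n)$-invariant — equivalently, that the horizontal vector fields $H_i$, while not individually projectable, combine in $\sum_i H_i^2$ in an $\OO(n)$-equivariant way. An alternative, more computational route that avoids invariant-theory subtleties is to work directly in coordinates: using $\Sigma(x,\alpha)=(x,(\alpha\alpha^T)^{-1})$ and the coordinate expression for the $H_i$ (with Christoffel symbols of $g$), compute $H_i(f\circ\Sigma)$ and then $H_i^2(f\circ\Sigma)$ by the chain rule, and observe that every term that survives after summing over $i$ is a function of $(x,\sigma)$ alone. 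I expect the invariance argument to be the cleaner of the two and would present it as the main proof, relegating the coordinate verification of ellipticity and the explicit form of $\tilde L$ to a short remark or to the reference \cite{sommer_evolution_2015}.
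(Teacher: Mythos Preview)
Your invariance argument is correct and is a genuinely different route from the paper's. The paper proceeds by direct coordinate computation: it expands $\sum_i H_i^2(f\circ\Sigma)$ using $H_i=h_u(\alpha_i^j\partial_j)$, observes that the second-order part produces the coefficient $\sum_i\alpha_i^j\alpha_i^k=\sigma^{jk}$, and then must separately verify that the first-order drift term $\eta^k(u)=\sum_i\alpha_i^j(u)\,h_u(\partial_j)\alpha_i^k(u)$ is $\OO(n)$-invariant by an explicit check $\eta^k(u\cdot\rho)=\eta^k(u)$. This yields the concrete formula $\tilde L=\sum_i V_i^2+\eta^k h^S(\partial_k)$ with $V_i=(\sigma^{1/2})^{jk}h^S(\partial_k)$. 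Your approach replaces all of this by the single observation that $R_\rho^*$ commutes with $L$ for $\rho\in\OO(n)$, so that $\OO(n)$-invariant functions are sent to $\OO(n)$-invariant functions, and descent along the principal bundle $\Sigma$ is automatic.

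What each buys: the paper's computation gives an explicit local expression for $\tilde L$, which is useful later and makes the degenerate-elliptic structure visible (a sum of squares plus a drift along $\mathcal H^S$). Your argument is cleaner and conceptually explains \emph{why} the descent works without tracking Christoffel-laden terms; it also makes clear that the result holds for any $\OO(n)$-invariant second-order operator built from the $H_i$, not just $\sum_i H_i^2$. Your remark that the coordinate route is an available alternative is exactly what the paper actually does, so you have in effect identified both proofs. One small point: when you write the symbol as $\tfrac12\sum_i h^S_\sigma(u_i)\otimes h^S_\sigma(u_i)$, note that the individual $h^S_\sigma(u_i)$ do depend on the choice of $u$ over $\sigma$; only the sum is invariant, which you do say, but it is worth being explicit that $\Sigma_*H_i$ is not itself a well-defined vector field on $\Symp M$.
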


\begin{proof}
Choose local coordinates on $M$. Then there are horizontal  vector fields $ h^S(\partial_i )$ on $\Symp M$. Consider
\begin{align*}
\sum_i H_i^2 (f\circ \Sigma)(u) {}& = \sum_i  \left(h_u\left(\alpha_i^j(u) \partial_j\right)\right)^2 (f\circ \Sigma)(u) \\
&=\sum_i \alpha_i^j(u)\, \alpha_i^k(u)\, h_{u}\left(\partial_j\right)\left(h^S_{\Sigma(u)}(\partial_k)f\right)(\Sigma(u))\\
&\quad + \sum_i \alpha_i^j(u) \, \left(h_u(\partial_j)\, \alpha_i^k(u)\right) \left(h^S_{\Sigma(u)}(\partial_k)f\right)(\Sigma(u))\\
& =\sum_i \sigma^{jk}\left(\Sigma(u)\right) h^S_{\Sigma(u)}\left(\partial_j\right)h^S_{\Sigma(u)}\left(\partial_k\right)(f)\left(\Sigma(u)\right) \\
&\quad + \sum_i \alpha_i^j(u)  \left(h_u\left( \partial_j\right) \alpha_i^k(u)\right) \left(h^S_{\Sigma(u)}(\partial_k)f\right)\left(\Sigma(u)\right)
\end{align*}
Here $\sigma^{jk}=\sigma^{-1}(\partial_j,\partial_k )$. Note that the function $\eta^k(u)=\sum_i \alpha_i^j(u)\, \left( h_u(\partial_j)\, \alpha_i^k(u)\right)$ depends only on $\Sigma(u)$ since if $\rho$ is an orthogonal map, then 
\begin{equation*}
\eta_i^k( u\cdot \rho) = \sum_i \rho_{i}^s\, \alpha_s^j(u) \, h_{u\cdot \rho}(\partial_j) \, \alpha_l^k( u \cdot \rho ) = \sum_i\rho_{i}^s\, \rho_i^l\, \alpha_s^j(u)\, h_{u}(\partial_j) \, \alpha_l^k(u)=\eta_i^k(u). 
\end{equation*}
Letting $V_i(\sigma) = (\sigma^{1/2})^{jk}h^S_{\sigma}(\partial_k )$, we thus find that the theorem holds with
\begin{equation*}
\tilde{L}= \sum_i V_i^2 + \eta^k h^S(\partial_k ).
\end{equation*}
The last statement follows from the definition of an $L$-diffusion \eqref{Ldiff} and the corresponding definition of an $\tilde{L}$-diffusion.
\end{proof}

As a consequence, we find that the distribution of $X_t$ is overparametrized by the initial conditions in $FM$.

\begin{corollary}\label{reduce}
The distribution of $Y_t$ (and hence $X_t$) depends only on $\Sigma(U_0)\in \Symp M$.
\end{corollary}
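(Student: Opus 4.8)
The plan is to derive Corollary \ref{reduce} as a direct consequence of the preceding Proposition, together with the standard fact that an $\tilde{L}$-diffusion is determined in law by its generator and initial distribution. First I would recall the setup: $U_t$ solves the Stratonovich SDE \eqref{mainSDE} with $U_0 = u_0$, and $Y_t = \Sigma(U_t)$. By the Proposition, there is a (degenerate) elliptic operator $\tilde{L}$ on $\Symp M$ with $L(f\circ\Sigma) = \tilde{L}(f)$ for all smooth $f$ on $\Symp M$. Applying the $L$-diffusion property \eqref{Ldiff} to the function $f\circ\Sigma$ and using this intertwining relation, one sees that
\begin{equation*}
f(Y_t) - f(Y_0) - \int_0^t \tilde{L}(f)(Y_s)\,ds
\end{equation*}
is a local martingale for every smooth $f$ on $\Symp M$; that is, $Y_t$ is an $\tilde{L}$-diffusion started at $\Sigma(u_0)$. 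This is essentially already stated in the Proposition, so the content of the corollary is really the uniqueness in law.

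Next I would invoke the well-posedness of the martingale problem for $\tilde{L}$. The operator $\tilde{L} = \sum_i V_i^2 + \eta^k h^S(\partial_k)$ is a sum of squares of smooth vector fields plus a smooth first-order term, hence a Hörmander-type operator whose coefficients are smooth; the associated martingale problem is well posed, so the law of an $\tilde{L}$-diffusion is uniquely determined by its starting point. Consequently, if $u_0$ and $\tilde{u}_0$ are two frames with $\Sigma(u_0) = \Sigma(\tilde{u}_0) = \sigma$, the corresponding processes $Y_t$ and $\tilde{Y}_t$ are both $\tilde{L}$-diffusions from $\sigma$ and therefore have the same law. Since $X_t = \pi(U_t) = q(\Sigma(U_t)) = q(Y_t)$, the law of $X_t$ is a measurable image of the law of $Y_t$, and hence also depends only on $\Sigma(u_0)$. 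This proves the corollary.

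The one technical point that deserves care — and which I expect to be the main obstacle — is justifying uniqueness in law for the $\tilde{L}$-diffusion on the non-compact, stratified-at-infinity manifold $\Symp M$: one must either appeal to non-explosion (which follows here because $\Sigma$ maps the complete diffusion $U_t$ forward, so $Y_t$ inherits non-explosion and the martingale problem is well posed up to the explosion time, which is almost surely infinite) or argue more directly that $Y_t = \Sigma(U_t)$ is a \emph{functional} of the given SDE solution and apply uniqueness in law for \eqref{mainSDE} together with the fact that two initial frames with the same image under $\Sigma$ differ by the right $\OO(n)$-action, under which the horizontal vector fields are equivariant. Indeed the equivariance route is cleanest: if $\tilde{u}_0 = u_0 \cdot \rho$ with $\rho \in \OO(n)$, then $\tilde{U}_t := U_t \cdot \rho$ solves $d\tilde{U}_t = H_i(\tilde{U}_t)\circ d(\rho^{-1}W_t)^i$, and $\rho^{-1}W_t$ is again a standard Brownian motion since $\rho$ is orthogonal; as $\Sigma(U_t\cdot\rho) = \Sigma(U_t)$, we get $\Sigma(\tilde{U}_t) = \Sigma(U_t)$ in law, which is exactly the claim. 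I would present the equivariance argument as the main proof and mention the martingale-problem viewpoint as an alternative.
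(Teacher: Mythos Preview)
Your proposal is correct and follows essentially the same approach as the paper: the paper's proof is a one-line appeal to \cite[Thm.~1.3.6]{hsu_stochastic_2002} for uniqueness in law of an $\tilde{L}$-diffusion given its initial distribution, which is exactly your martingale-problem argument. Your alternative equivariance route via the right $\OO(n)$-action is also presented in the paper immediately after the corollary as ``another proof of Corollary~\ref{reduce}.''
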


\begin{proof}
According to \cite[Thm. 1.3.6]{hsu_stochastic_2002} an $\tilde{L}$-diffusion is uniquely determined by its initial distribution.
\end{proof}

We now show that the distribution of $X_t$ depends only on $\Sigma(U_0)$. To see this, let $a \in \GL(n)$ be a linear map. The action  $u\mapsto u\cdot a$ on $FM$ defines a diffeomorphism $FM \to FM$. Let $V_t = U_t \cdot a$. This
is a new process with $\pi V_t = \pi U_t =X_t$. Using \cite[Prop.
1.2.4]{hsu_stochastic_2002} and that $a_*H_i(u) = h_{u\cdot a}( u_i)=(a^{-1})^j_i H_j(u\cdot a)$, we see that $V_t $ solves the SDE 
\begin{equation}\label{orthogonalsde}
dV_t= (a^{-1})_i^j H_j(V_t)  \circ dW_t^i = H_j(V_t)\circ d(a^{-1}W)_t^j 
\end{equation}
with initial condition $V_0 = u_0 \cdot a$. But $a^{-1}W_t$ is a Brownian motion on $\R^n$ with covariance matrix $(a^Ta)^{-1}$. Choosing $a$ to be orthogonal yields another proof of Corollary \ref{reduce}. 

\begin{proposition}
The distribution of the process $X_t=\pi(U_t)$, where $U_t$ is a solution of \eqref{mainSDE} with initial condition $U_0=u_0$, is uniquely determined by $\Sigma(u_0)$.
\end{proposition}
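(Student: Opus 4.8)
The plan is to combine the two threads already developed in this section: the reduction of the generator to $\Symp M$ and the $\GL(n)$-equivariance of the defining SDE \eqref{mainSDE}. The shortest route is to notice that the factorization $\pi = q \circ \Sigma$ holds pointwise, so that $X_t = \pi(U_t) = q(\Sigma(U_t)) = q(Y_t)$ almost surely for every $t$; hence the law of $X_t$ is the pushforward by the fixed smooth map $q$ of the law of $Y_t$, and Corollary \ref{reduce} already tells us that the latter depends only on $\Sigma(U_0)$. That settles the claim in one line.

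I would nonetheless prefer to present the more transparent direct argument on $FM$, which is the one sketched in the computation around \eqref{orthogonalsde}. Suppose $u_0,\tilde{u}_0\in FM$ satisfy $\Sigma(u_0) = \Sigma(\tilde{u}_0)$. By the polar-decomposition identification $\Symp M \cong FM/\OO(n)$ established earlier, the fibres of $\Sigma$ are exactly the $\OO(n)$-orbits, so there is $\rho\in\OO(n)$ with $\tilde{u}_0 = u_0\cdot\rho$. Let $U_t$ solve \eqref{mainSDE} with $U_0 = u_0$, driven by a standard Brownian motion $W_t$, and set $V_t = U_t\cdot\rho$. Then $V_0 = \tilde{u}_0$ and $\pi V_t = \pi U_t = X_t$, and by the computation in \eqref{orthogonalsde} the process $V_t$ solves $dV_t = H_j(V_t)\circ d(\rho^{-1}W)^j_t$. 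Since $\rho$ is orthogonal, $\rho^{-1}W_t$ is again a standard $\R^n$-valued Brownian motion, so $V_t$ is a stochastic development of a standard Brownian motion started at $\tilde{u}_0$; being an $L$-diffusion on $FM$ with initial point $\tilde{u}_0$, it has the same law as the process $\tilde{U}_t$ defined by \eqref{mainSDE} with $\tilde{U}_0 = \tilde{u}_0$, by the uniqueness statement \cite[Thm. 1.3.6]{hsu_stochastic_2002}. Projecting to $M$, $X_t = \pi V_t$ and $\pi\tilde{U}_t$ have the same law, so the law of $X_t$ depends only on the orbit $u_0\cdot\OO(n)$, equivalently only on $\Sigma(u_0)$.

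The only genuinely delicate point is the passage from $H_j(V_t)\circ dW^j_t$ to $H_j(V_t)\circ d(\rho^{-1}W)^j_t$ together with the claim that $\rho^{-1}W_t$ remains a standard Brownian motion: the former is the transformation rule for Stratonovich SDEs under the diffeomorphism $u\mapsto u\cdot\rho$, using \cite[Prop. 1.2.4]{hsu_stochastic_2002} and the identity $a_*H_i = (a^{-1})^j_i H_j$ exactly as in the derivation of \eqref{orthogonalsde}, and the latter is immediate since the covariance of $\rho^{-1}W_t$ is $(\rho^T\rho)^{-1} = \mathrm{Id}$. Everything else — the pointwise identity $\pi = q\circ\Sigma$, the orbit description of the fibres of $\Sigma$, and the fact that a stochastic development of standard Brownian motion is an $L$-diffusion — is routine bookkeeping. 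I would make the direct $FM$-argument the main proof and keep the $Y_t$ route as the one-line alternative.
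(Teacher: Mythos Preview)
Your two arguments are both correct and both establish that the law of $X_t$ depends only on $\Sigma(u_0)$; the $FM$-route is precisely the computation the paper records in the paragraph preceding the proposition (ending with ``Choosing $a$ to be orthogonal yields another proof of Corollary~\ref{reduce}''), and the $Y_t$-route is Corollary~\ref{reduce} verbatim.

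The paper's own proof of the proposition, however, addresses the \emph{converse} implication: it shows that if two initial frames give the same law for $X_t$, they must already have the same $\Sigma$. The tool is anti-development \cite[Theorem~2.3.4 + 2.3.5]{hsu_stochastic_2002}: choosing $a^1,a^2\in\GL(n)$ with $U_0^1\cdot a^1 = U_0^2\cdot a^2$, the shifted processes $V_t^\nu = U_t^\nu\cdot a^\nu$ start at a common frame and are driven by $(a^\nu)^{-1}W_t$; since $X_t^\nu = \pi V_t^\nu$, anti-development recovers the driving noise from $X_t^\nu$ and the common initial frame, so equality in law of $X_t^1,X_t^2$ forces $((a^1)^Ta^1)^{-1}=((a^2)^Ta^2)^{-1}$, which in turn is equivalent to $\Sigma(U_0^1)=\Sigma(U_0^2)$. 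Your proof does not touch this direction. Whether that counts as a gap depends on how one reads ``uniquely determined by'': on the standard reading (the law is a function of $\Sigma(u_0)$) you are done, but since that direction has already been established twice before the proposition, the paper is evidently supplying the injectivity half here, via a genuinely different idea.
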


\begin{proof}
Suppose that we are given two processes $U_t^1$ and ${U}_t^2$ each solving $dU_t^\nu= H_i(U_t^\nu)\circ dW_t^i$, $\nu=1,2$, and with $U_0^1 \cdot a^1= {U}_0^2 \cdot a^2$ for some $a^1,a^2\in \GL(n)$. Define $V_t^\nu= U_t^\nu \cdot a^\nu$. Then both $V_t^\nu$ solve \eqref{orthogonalsde} and satisfy $V_0^1=V_0^2$. 
The anti-development theorem \cite[Theorem 2.3.4 +
2.3.5]{hsu_stochastic_2002} shows that the process $(a^\nu)^{-1}W_t$ can be recovered uniquely by solving two stochastic differential equations involving only $\pi V_t^\nu = \pi U_t^\nu =X_t^\nu$. Since the distribution of $a^{-1}W_t$  is uniquely determined by $(a^Ta)^{-1}$, $X_t^1$ and $X_t^2$ can only have the same distribution if $((a^1)^Ta^1)^{-1}=((a^2)^Ta^2)^{-1}$. 
\end{proof}

\subsection{Transition Probabilities}\label{transition}
We first recall that a finite set of vector fields $X_i$ on a manifold define a \emph{sub-Laplacian}  with respect to a given volume form $\mu$ on the manifold by the formula
\begin{equation*}
\Delta=\sum_i X_i^2 + \text{div}_{\mu}(X_i) X_i,
\end{equation*}
where the divergence is computed with respect to $\mu$.

\begin{proposition}\label{sublaplacian}
Let $M$ be Riemannian and suppose that $\mathcal{H}$ satisfies the H\"{o}rmander condition on $Q(u_0)$.
Then there exists a volume form $\mu_Q$ on $Q(u_0)$ such that the horizontal vector fields $H_i$ have divergence 0. In particular, $L$ is a sub-Laplacian with respect to this volume form.
\end{proposition}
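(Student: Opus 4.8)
The plan is to realize $\mu_Q$ as the canonical ``Liouville'' volume form of the principal subbundle $Q(u_0)$ and to check the divergence condition by a short computation with Cartan's structure equations; conceptually this is the classical fact that the horizontal Laplacian on $OM$ is symmetric for the Liouville measure, restricted to $Q(u_0)$. First I would set up $\mu_Q$. By Proposition~\ref{subbundle}, $Q(u_0)$ is a principal subbundle of $FM$ with structure group $\mathrm{Hol}_{u_0}(FM,\mathcal{H})$, which, $M$ being Riemannian, is closed — hence compact — in $\OO(T_{x_0}M)$ with Lie algebra $\mathfrak{h}_{u_0}\subseteq\mathfrak{so}(T_{x_0}M)$, and the Levi-Civita connection reduces to $Q(u_0)$: its connection form $\omega=(\omega^j_l)$ restricts there to a principal connection form, hence takes values in $\mathfrak{h}_{u_0}$. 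Let $\theta=(\theta^1,\dots,\theta^n)$ be the tautological $\R^n$-valued $1$-form, $\theta_u(\xi)=u^{-1}(\pi_*\xi)$, so that $\theta^j(H_i)=\delta^j_i$ and $\theta$ vanishes on the vertical bundle; it restricts to $Q(u_0)$. I would fix a basis $A_1,\dots,A_k$ of $\mathfrak{h}_{u_0}$ with dual basis $A^1,\dots,A^k$ and set $\omega^a:=A^a(\omega|_{Q(u_0)})$. Then the $\theta^j$ restrict to the coframe dual to $H_1,\dots,H_n$ on $\mathcal{H}$ and the $\omega^a$ to the coframe dual to the fundamental vector fields of $A_1,\dots,A_k$ on the vertical bundle, so $\{\theta^1,\dots,\theta^n,\omega^1,\dots,\omega^k\}$ is a global coframe and
\begin{equation*}
\mu_Q:=\theta^1\wedge\cdots\wedge\theta^n\wedge\omega^1\wedge\cdots\wedge\omega^k
\end{equation*}
is a globally defined volume form on $Q(u_0)$; along a fibre it is a Haar volume of $\mathrm{Hol}_{u_0}$ and along a horizontal subspace it agrees with $\pi^*\mathrm{vol}_g$ (frames in $Q(u_0)\subset OM$ being orthonormal), so $\mu_Q$ is locally the product of the Riemannian volume and Haar measure.

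Next I would show $\mathrm{div}_{\mu_Q}(H_i)=0$. Since $\mu_Q$ has top degree, $d\mu_Q=0$ and $\mathrm{div}_{\mu_Q}(H_i)$ is characterised by $\mathcal{L}_{H_i}\mu_Q=\mathrm{div}_{\mu_Q}(H_i)\,\mu_Q$, which I would evaluate factorwise using $\mathcal{L}_{H_i}=\iota_{H_i}d+d\iota_{H_i}$, together with $\iota_{H_i}\omega=0$ and $\iota_{H_i}\theta^l=\delta^l_i$. Torsion freeness of the Levi-Civita connection gives the first structure equation $d\theta^j=-\omega^j_l\wedge\theta^l$, whence
\begin{equation*}
\mathcal{L}_{H_i}\theta^j=\iota_{H_i}d\theta^j=\omega^j_i,
\end{equation*}
a purely \emph{vertical} $1$-form (a component of $\omega|_{Q(u_0)}$, hence a combination of the $\omega^a$), and the second structure equation $d\omega=-\tfrac12[\omega\wedge\omega]+\Omega$ gives
\begin{equation*}
\mathcal{L}_{H_i}\omega^a=\iota_{H_i}d\omega^a=\iota_{H_i}\Omega^a,
\end{equation*}
which is \emph{semibasic} (a combination of the $\theta^l$) because the curvature form $\Omega$ is semibasic. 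Expanding $\mathcal{L}_{H_i}\mu_Q$ by the Leibniz rule, every term in which $\mathcal{L}_{H_i}$ differentiates a $\theta^j$ contains $\omega^j_i\wedge\omega^1\wedge\cdots\wedge\omega^k=0$ (a vertical $1$-form against the top vertical form), and every term in which it differentiates an $\omega^a$ contains $\theta^1\wedge\cdots\wedge\theta^n\wedge\iota_{H_i}\Omega^a=0$ (a semibasic $1$-form against the top horizontal form). Hence $\mathcal{L}_{H_i}\mu_Q=0$, i.e.\ $\mathrm{div}_{\mu_Q}(H_i)=0$ for every $i$. Consequently the sub-Laplacian of $H_1,\dots,H_n$ relative to $\mu_Q$ is $\Delta=\sum_iH_i^2+\mathrm{div}_{\mu_Q}(H_i)H_i=\sum_iH_i^2$, so $L=\tfrac12\sum_iH_i^2=\tfrac12\Delta$ is (up to the conventional factor $\tfrac12$) the sub-Laplacian of $\mu_Q$.

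The computation itself is routine; the parts that actually carry content, and where I would be careful, are (a) choosing $\mu_Q$ correctly — it must be assembled from the $\mathfrak{h}_{u_0}$-directions of $\omega$, not from all of $\mathfrak{so}(T_{x_0}M)$ — and checking it is globally well defined on $Q(u_0)$, and (b) the fact that the vanishing uses in an essential way that the connection is \emph{torsion free} (so $\mathcal{L}_{H_i}\theta^j$ is vertical) and \emph{metric} (so it reduces to $Q(u_0)\subset OM$ with compact structure group); for a general connection on $M$ the analogous statement need not hold.
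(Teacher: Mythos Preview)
Your argument is correct and genuinely different from the paper's. The paper does not build $\mu_Q$ by hand from $\theta$ and $\omega$; instead it invokes Mok's Riemannian metric $G$ on $FM$ (for which $\mathrm{div}_G H_i=0$ is already known), restricts $G$ to $\tilde G$ on $Q(u_0)$, and then compares $\mathrm{div}_G$ with $\mathrm{div}_{\tilde G}$ via the Levi--Civita connections, using that the fibres of $FM$ are autoparallel (totally geodesic) for $G$ so that the extra second-fundamental-form terms drop out. Your route is more self-contained: you identify $\mu_Q$ as the Liouville form $\theta^1\wedge\cdots\wedge\theta^n\wedge\omega^1\wedge\cdots\wedge\omega^k$ on the holonomy reduction and kill $\mathcal L_{H_i}\mu_Q$ directly with the structure equations, which has the virtue of making transparent exactly where torsion-freeness ($\mathcal L_{H_i}\theta^j=\omega^j_i$ is purely vertical) and the reduction $\omega|_{Q(u_0)}\in\mathfrak h_{u_0}$ enter. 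The paper's route, by contrast, ties $\mu_Q$ to Mok's metric $G$, which it later exploits for the fibrewise decomposition $d\mu_Q=d\mu_{\pi_{Q(u_0)}^{-1}(y)}\,d\mu_g$ used in \eqref{density}; with your construction that decomposition would have to be checked separately.

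One small inaccuracy worth fixing: you write ``frames in $Q(u_0)\subset OM$ being orthonormal'', but $Q(u_0)\subset OM$ only when $u_0$ itself is orthonormal. In general $Q(u_0)$ sits in the subbundle of frames with the same Gram matrix as $u_0$, so $\theta^1\wedge\cdots\wedge\theta^n$ on $\mathcal H$ equals $\pi^*\mathrm{vol}_g$ only up to the constant $\sqrt{\det g(u_{0,i},u_{0,j})}$. This is a parenthetical remark and does not affect your divergence computation, which nowhere uses orthonormality of the frame---only that the Levi--Civita connection reduces to $Q(u_0)$ and is torsion free.
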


\begin{proof}
According to Proposition \ref{subbundle},  $Q(u_0)$ is a subbundle of $FM$. In \cite{mok_differential_1978} a Riemannian metric $G$ is defined on $FM$ such that the $H_i$ are divergence free with respect to $G$ (see \cite[Thm
8.4]{mok_differential_1978}). This metric makes the horizontal and vertical distributions orthonormal. Let $\tilde{G}$ be the restriction of $G$ to $Q(u_0)$. Moreover, let $\nabla $ and $\tilde{\nabla}$ be the Levi-Civita connections with respect to $G$ and $\tilde{G}$, respectively.
Locally choose orthonormal vector fields $E_1,\dots, E_n$ spanning $\mathcal{H}$ and extend by $V_1,\dots,V_d$ to an orthonormal basis of $TQ(u_0)$ and further extend by $W_1,\dots, W_{n^2-d}$ to an orthonormal basis for $TFM$. The divergence with respect to the volume form associated to $G$ is given by the trace of the connection, so
\begin{align*}
  \text{div}_{G} H_i = \sum_{j=1}^n \langle \nabla_{E_j}H_i,E_j \rangle_G+
  \sum_{j=1}^d \langle \nabla_{V_j}H_i,V_j \rangle_G+\sum_{j=1}^{n^2-d} \langle \nabla_{W_j}H_i,W_j \rangle_G.
\end{align*}
Since $\tilde{\nabla}$ is given by projecting $\nabla$ onto $TQ(u_0)$ and since $\nabla$ is compatible with the metric,
\begin{align*}
\text{div}_{G} H_i{}& = \sum \langle \tilde{\nabla}_{E_i}H_i,E_i \rangle_G+ \sum \langle \tilde{\nabla}_{V_i}H_i,V_i \rangle_G+\sum \langle \nabla_{W_i}H_i,W_i \rangle_G\\
&= \text{div}_{\tilde{G}} H_i +\sum \langle H_i, \nabla_{W_i} W_i \rangle_G.
\end{align*}
According to \cite[Thm 4.3]{mok_differential_1978}, the fibers of $FM$ are autoparallel, hence $\nabla_{W_i} W_i$ is vertical and
\begin{equation*}
0=\text{div}_{G} H_i = \text{div}_{\tilde{G}} H_i.
\end{equation*}
\end{proof}
Still assuming $M$ to be Riemannian, let $U_0$ be a Brownian motion on $FM$ with initial value $u_0$ and assume that the horizontal vector fields satisfy H\"{o}rmander's condition on $Q(u_0)$. Since $L$ is a sub-Laplacian by Proposition \ref{sublaplacian}, there exists a unique solution $p_t^{Q(u_0)}(u_1,u_2)$ to the sub-Riemannian heat equation
\begin{equation*}
\frac{\partial}{\partial t} p_t^{Q(u_0)}(u_1,u_2) = L p_t^{Q(u_0)}(u_1,u_2)
\end{equation*}
such that $\lim_{t\to 0} p_t^{Q(u_0)}(u_1,u_2)=\delta_{u_1}(u_2)$ in the sense of distributions and  $p_t^{Q(u_0)}$ is smooth on $(0,\infty)\times Q(u_0) \times Q(u_0)$ \cite{strichartz_sub-riemannian_1986}. The function $p_t^{Q(u_0)}$ is called the heat kernel and provides a density with respect to $\mu_Q$ for the distribution of $U_t$ at time $t$
(see \cite{hsu_stochastic_2002} for an argument in the Riemannian case).

Let $\piQuo$ denote the restriction of $\pi$ to $Q(u_0)$. The distribution of $X_t$ satisfies 
\begin{align}\label{density}
  P(X_t\in C){}& = P(U_t \in \piQuo^{-1}(C))\\ \nonumber
   &= \int_{\piQuo^{-1}(C)} p^{Q}_t(u_0,u)\mu_Q(du)\\ \nonumber
   &= \int_C \int_{\piQuo^{-1}(y)}p^{Q}_t(u_0,u) \mu_{\piQuo^{-1}(y)}(du) \mu_g(dy)
\end{align}
for any Borel set $C\subseteq M$. Here we have used that the volume form $\mu_Q$ defined in the proof of Proposition \ref{sublaplacian} satisfies $d\mu_Q=d\mu_{\piQuo^{-1}(y)} d\mu_g$ where $\mu_g$ is the Riemannian volume form on $M$ associated with $g$ and $\mu_{\piQuo^{-1}(y)}$ is the volume form on $\piQuo^{-1}(y)$ coming from regarding $\piQuo^{-1}(y)$ as a subgroup of $\GL(n)$ and restricting the Euclidean metric. This follows from the
expression of the metric $G$ in \cite[Thm 4.1]{mok_differential_1978}.
Equation \eqref{density} shows that $X_t$ has a density, and since this only depends on $\sigma=\Sigma(u_0) \in \Symp M$, we denote it by $p_t^M(\sigma, y)$. 
We see that $p_t^M$ is given by
\begin{equation}
   p_t^M(\Sigma(u_0),y)
  =   
  \int_{\piQuo^{-1}(y)} p^Q_t(u_0,u)du.
  \label{eq:ptm}
\end{equation}

The density $p_t^M$ can be taken as a starting point for a statistical estimation procedure, such as maximum likelihood estimation, in order to estimate the initial conditions $\sigma \in \Symp M$. There is no closed formula for $p_t^M$, not even in the well-studied isotropic case (see \cite{hsu_stochastic_2002} for asymptotic results). In Section \ref{smalltime}, we derive asymptotic results that can be used for heuristic estimation procedures, see Section \ref{statistics}.

\section{Most Probable Paths}\label{sec4}
Instead of using the geodesic distance to measure the similarity between points, it was argued in Section \ref{background} that when the data points are considered outcomes of a stochastic process on $M$, it is natural to consider the maximal probability of a path connecting two points. We are going to describe the notion of most probable paths and their characterization via the Onsager-Machlup functional \cite{fujita_onsager-machlup_1982}. Afterwards, we discuss how most probable paths for the driving Euclidean process relate to the sub-Riemannian metric. This does not require $M$ to be Riemannian.

If $X_t$ is a stochastic process on a manifold $M$ with initial point $X_0=x_0$, the most probable path from $x_0\in M$ to another point  $y\in M$ can be defined as the path that maximizes the probability that $X_t$ sojourns around the path \cite{fujita_onsager-machlup_1982}.
 More formally, let $\gamma: [0,1]\to M$ be a smooth path with $\gamma(0) = x_0$. We denote by $\mu_\epsilon^M(\gamma)$ the probability that $X_t$ stays within distance $\epsilon$ from $\gamma$, i.e.
$$
\mu_\epsilon^M(\gamma)
=
P\left(
d_g(X_t,\gamma(t))<\epsilon,
\ \forall t\in[0,1]
\right)
\ .
$$
The most probable path is the path that maximizes $\mu_\epsilon^M(\gamma)$ when $\epsilon \to 0$

\subsection{The Isotropic Case}
The most probable paths of a Brownian motion have only been determined in the case where the Brownian motion is isotropic, i.e.\ the starting frame $u_0$ is orthonormal. The reason why this case is easier to handle is that the isotropic Brownian motion is intrinsically defined on $M$ as the diffusion generated by the Laplace-Beltrami operator.    

\begin{theorem}[see e.g.\ \cite{fujita_onsager-machlup_1982}]
Let $X_t$ be an isotropic Brownian motion on $M$  and let $\gamma: [0,1]\to M$ be a smooth path starting at $x_0$.
Then as $\epsilon \to 0$
\begin{equation}\label{OMeq}
\mu_\epsilon^{M}(\gamma)
=
\exp\left(c_1+\frac{c_2}{\epsilon^2}+
\int_0^1L_M(\gamma(t),\dot{\gamma}(t))dt\right),
\end{equation}
where $c_1,c_2$ are constants independent of $\gamma$, $L_M$ is the Onsager-Machlup functional
$$
L_M(\gamma(t),\dot{\gamma}(t))
=
-\frac{1}{2}\|\dot{\gamma}(t)\|^2_g
+
\frac{1}{12}S(\gamma(t)),
\ 
$$
and $S$ is the scalar curvature on $M$. 
\end{theorem}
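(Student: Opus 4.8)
The plan is to reduce the computation of the tube probability $\mu_\epsilon^M(\gamma)$ to a small--ball estimate for a Euclidean Brownian motion, by means of a moving frame along $\gamma$ combined with Girsanov's theorem. First I would fix a tubular neighbourhood of $\gamma([0,1])$ and parallel transport an orthonormal frame along $\gamma$; this yields, for each $t$, Riemannian normal coordinates $\xi=(\xi^1,\dots,\xi^n)$ centred at $\gamma(t)$ in which the tube event $\{d_g(X_t,\gamma(t))<\epsilon\ \forall t\}$ becomes $\{|\xi_t|<\epsilon\ \forall t\}$ up to an error negligible as $\epsilon\to 0$. In these coordinates $X_t$ is represented by a process $\xi_t$ with $\xi_0=0$; expanding the Laplace--Beltrami generator in normal coordinates and accounting for the time--dependence of the frame, one finds that $\xi_t$ solves an SDE of the form
\begin{equation*}
d\xi_t = dB_t - \dot\gamma(t)\,dt + b(\xi_t,t)\,dt + \big(\sigma(\xi_t,t)-I\big)\,dB_t ,
\end{equation*}
where $B_t$ is a standard $\R^n$-valued Brownian motion, $\dot\gamma$ is expressed in the moving frame, $b(\xi,t)=O(|\xi|)$ collects the first--order part of $\Delta_g$ (including the term $\partial_i\log\sqrt{\det g}$), and $\sigma(\xi,t)-I=O(|\xi|^2)$ comes from $g^{ij}(\xi)=\delta^{ij}+\tfrac13 R_{ikjl}(\gamma(t))\xi^k\xi^l+O(|\xi|^3)$.

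Next I would apply Girsanov's theorem to remove the deterministic drift $-\dot\gamma(t)$, obtaining an equivalent measure $Q$ under which $\xi_t$ is, to leading order, a Brownian motion, with
\begin{equation*}
\frac{dP}{dQ}\Big|_{\mathcal{F}_1}
=
\exp\!\left(
-\int_0^1 \langle\dot\gamma(t),d\xi_t\rangle
-\frac12\int_0^1 \|\dot\gamma(t)\|_g^2\,dt
+
\mathcal{R}_\epsilon
\right),
\end{equation*}
where $\mathcal{R}_\epsilon$ gathers the contributions of $b$ and $\sigma-I$. On the tube event $|\xi_t|<\epsilon$, integration by parts gives $\int_0^1\langle\dot\gamma,d\xi_t\rangle=\langle\dot\gamma(1),\xi_1\rangle-\int_0^1\langle\ddot\gamma(t),\xi_t\rangle\,dt=O(\epsilon)$, so this stochastic integral does not contribute in the limit. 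The classical small--deviation asymptotics for Brownian motion confined to a small ball (equivalently, the bottom of the Dirichlet spectrum of the Euclidean Laplacian on the unit ball) give $\log Q(|\xi_t|<\epsilon\ \forall t\in[0,1])=c_1+c_2\epsilon^{-2}+o(1)$ with $c_1,c_2$ depending only on $n$, hence independent of $\gamma$. Writing $\mu_\epsilon^M(\gamma)=\mathbb{E}_Q\big[\mathbf{1}_{\text{tube}}\,\tfrac{dP}{dQ}\big]$ and factoring out the now $\gamma$-dependent-but-$\epsilon$-stable exponential then yields \eqref{OMeq}, provided one shows $\mathcal{R}_\epsilon\to\tfrac1{12}\int_0^1 S(\gamma(t))\,dt$ modulo $\gamma$-independent pieces absorbable into $c_1+c_2\epsilon^{-2}$.

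The hard part is exactly this last point: extracting the curvature term. The idea is that on the conditioned process the quadratic forms $R_{ikjl}(\gamma(t))\xi^k_t\xi^l_t$ and $\mathrm{Ric}_{kl}(\gamma(t))\xi^k_t\xi^l_t$ entering the expansions of $g^{ij}$ and of $\sqrt{\det g}$ may, after integration in $t$, be replaced by a universal multiple of their trace — this is where the approximate $\OO(n)$-symmetry of the conditioned Brownian motion forces the appearance of $S=g^{ij}\mathrm{Ric}_{ij}$ — and the Itô combinatorics produce the coefficient $\tfrac1{12}$. Making this rigorous requires uniform control of the conditioned moments $\mathbb{E}_Q[\xi^k_t\xi^l_t\mid\text{tube}]$, together with a careful verification that genuinely higher--order terms, and the diffusion--coefficient mismatch $\sigma-I$ (which Girsanov alone does not absorb and which in a fully rigorous treatment is better handled by an auxiliary change of variables, or directly at the level of the Dirichlet heat kernel in the tube), contribute nothing in the limit. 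These estimates are carried out in \cite{fujita_onsager-machlup_1982}; see also the earlier Euclidean computations on which it builds.
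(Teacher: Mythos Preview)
The paper does not prove this theorem: it is stated with a citation to \cite{fujita_onsager-machlup_1982} and used as a known result, with no argument given. There is therefore nothing in the paper to compare your proposal against.

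That said, your sketch is a reasonable outline of the standard Onsager--Machlup computation as carried out in Takahashi--Watanabe and Fujita--Kotani: move to Fermi-type coordinates along $\gamma$, apply Girsanov to strip off the drift $-\dot\gamma$, use the Dirichlet small-ball asymptotics for Brownian motion to produce the $\gamma$-independent constants $c_1+c_2\epsilon^{-2}$, and then extract the scalar-curvature contribution from the residual terms. You are also right that the delicate step is the last one, and in particular that the diffusion-coefficient perturbation $\sigma-I=O(|\xi|^2)$ cannot be removed by Girsanov alone; in the literature this is handled either by a further (random) change of variables or by working directly with the fundamental solution in the tube. Your deferral to the cited reference for those estimates is appropriate, since the paper itself does exactly the same.
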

The most probable paths are thus the paths that maximize the Onsager-Machlup functional 
\begin{equation}\label{energy}
\int_0^1 L_M(\gamma(t),\dot{\gamma}(t))dt = -E(\gamma)+ \frac{1}{12}\int_0^1S(\gamma(t))dt 
\end{equation}
involving the energy $E(\gamma)=\frac{1}{2}\int_0^1\|\dot{\gamma}(t)\|^2_g dt$ of the path plus a curvature correction term. In comparison, geodesics are the paths that simply minimize the energy. Hence, the most probable paths are generally not geodesics. The Onsager-Machlup functional thus provides a way of measuring similarity between points  different from the geodesic distance. 
However, in spaces of constant scalar curvature, for instance homogeneous spaces, the most probable paths coincide with geodesics. In particular, this holds in Euclidean space.
\begin{corollary}
Let $W_t$ be an isotropic Brownian motion in $\R^n$ starting at 0. Let $\gamma:[0,1]\to \R^n$ be a smooth path with $\gamma(0)=0$. Then $-L_{\R^n}(\gamma(t),\dot{\gamma}(t))= \frac{1}{2}\|\dot{\gamma}(t)\|_{\R^n}^2$. Therefore, the most probable path between two points is the straight line.
\label{lem:Lphi}
\end{corollary}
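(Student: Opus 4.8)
The plan is to specialize the preceding theorem to $M=\R^n$ with its standard flat metric. First I would note that Euclidean space has vanishing Riemann curvature, so in particular its scalar curvature satisfies $S\equiv 0$. Plugging this into the Onsager--Machlup functional of the theorem gives at once
\begin{equation*}
L_{\R^n}(\gamma(t),\dot{\gamma}(t)) = -\tfrac{1}{2}\|\dot{\gamma}(t)\|_{\R^n}^2 + \tfrac{1}{12}S(\gamma(t)) = -\tfrac{1}{2}\|\dot{\gamma}(t)\|_{\R^n}^2,
\end{equation*}
which is the asserted identity for $-L_{\R^n}$.

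Next, by the theorem the most probable path from $0$ to a fixed endpoint $y\in\R^n$ is the smooth path $\gamma$ with $\gamma(0)=0$ and $\gamma(1)=y$ that maximizes $\int_0^1 L_{\R^n}(\gamma(t),\dot{\gamma}(t))\,dt$; since the curvature term vanishes, this integral equals $-E(\gamma)$ with $E(\gamma)=\tfrac{1}{2}\int_0^1\|\dot{\gamma}(t)\|_{\R^n}^2\,dt$, so the most probable path is exactly the energy minimizer. It then remains to recall the elementary fact that the affine path $\gamma(t)=ty$ is the unique energy minimizer among smooth paths with these endpoints: by Cauchy--Schwarz,
\begin{equation*}
\|y\|_{\R^n} = \Bigl\|\int_0^1\dot{\gamma}(t)\,dt\Bigr\|_{\R^n} \le \int_0^1\|\dot{\gamma}(t)\|_{\R^n}\,dt \le \Bigl(\int_0^1\|\dot{\gamma}(t)\|_{\R^n}^2\,dt\Bigr)^{1/2},
\end{equation*}
so $E(\gamma)\ge\tfrac{1}{2}\|y\|_{\R^n}^2$ with equality precisely when $\dot{\gamma}$ is a.e.\ constant, i.e.\ $\gamma(t)=ty$.

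I expect no genuine obstacle here: the statement is a direct corollary of the theorem combined with a standard variational computation. The only subtlety worth flagging is that the theorem as stated concerns a compact manifold, whereas $\R^n$ is not compact; one resolves this either by observing that the Onsager--Machlup expansion is a purely local statement about the flat model, or by working inside a large closed ball containing $\gamma$, neither of which affects the conclusion that straight lines are the most probable paths.
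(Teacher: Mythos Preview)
Your proposal is correct and matches the paper's own treatment: the paper states the corollary without proof, treating it as an immediate specialization of the preceding theorem to the flat case $S\equiv 0$, which is exactly what you do. Your added Cauchy--Schwarz argument for the energy minimizer and your remark on compactness are both sound and go slightly beyond what the paper spells out.
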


\subsection{Anisotropic Case}
There seems to be no analogue of the Onsager-Machlup theorem available in the literature for Brownian motions with anisotropic covariance. With the construction of anisotropic processes that is the focus of this paper, an Onsager-Machlup theorem would apply to a process in the horizontally connected component $Q(u_0)$ in the frame bundle. Several problems occur, many of which are due to the sub-Riemannian structure. For instance, it is not clear whether to use a fiber bundle metric or the sub-Riemannian metric to define tubes. Another problem is the lack of normal coordinates that are essential in all proofs of the Onsager-Machlup theorem.

Instead, we model the most  probable paths of the \emph{driving process}, i.e.\ the underlying Euclidean Brownian motion $W_t$. Another advantage of this is that it does not require a Riemannian structure on $M$ but only a connection. Recall that the stochastic development  map $\tilde{\phi}_{u_0}$ takes the paths of a Euclidean Brownian motion $W_t$ with $W_0=0$ to the paths of a Brownian motion $X_t$ on $M$ by projecting a path $U_t$ in the frame bundle with $U_0=u_0$. However, it doesn't directly provide a link between the most probable paths in $\R^n$ and the most probable paths in $M$ that maximize the Onsager-Machlup function because the development map does not preserve tubes: if the Euclidean Brownian motion stays close to a path $\gamma$, the stochastic development of the path does not necessarily stay close to the development of $\gamma$ in $M$. 

\begin{definition}
Let $W_t$ be a standard Brownian motion and let $X_t=\phi_{u_0}(W_t)$  be a Brownian motion on $M$.
A \emph{most probable path for the driving process} from $x_0=\pi(u_0)\in M$ to $y\in M$ is a smooth path $\gamma: [0,1]\to M $ with $\gamma(0)=x_0 $ and $\gamma(1) = y$ such that its anti-development $\phi^{-1}_{u_0}(\gamma)$ is the most probable such path for $W_t$. That is, $\gamma$ is given by
\begin{equation*}
\textrm{argmin}_{\gamma, \gamma(0)=x_0, \gamma(1)=y} \int_0^1 -L_{\R^n}(\phi_{u_0}^{-1}(\gamma)(t),\tfrac{d}{dt}\phi_{u_0}^{-1}(\gamma)(t))\, dt.
\end{equation*}
\end{definition}
From the Onsager-Machlup theorem in Euclidean space, we obtain a characterization of the most probable paths for the driving process. 
The theorem shows that the most probable paths for the driving process coincide with the most probable paths introduced formally in \cite{sommer_anisotropic_2015}.
\begin{theorem}\label{driving}
Let $u_0$ be a frame in $T_{x_0} M$, let $W_t$ be a standard Brownian motion, and let
 $X_t=\tilde{\phi}_{u_0}(W_t)$. Suppose the H\"{o}rmander condition is satisfied on $Q(u_0)$ and that $Q(u_0)$ has compact fibers. Then most probable paths from $x_0$ to $y\in M$ for the driving process of $X_t$ exist, and they are projections of sub-Riemannian geodesics in $FM$ minimizing the sub-Riemannian distance from $u_0$ to $\pi^{-1}(y)$.
\end{theorem}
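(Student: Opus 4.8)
The plan is to reduce the problem to the Euclidean Onsager--Machlup theorem via the development map and then to identify the resulting variational problem with the sub-Riemannian length functional. First I would record the key observation that the development map $\phi_{u_0}$ is a bijection between absolutely continuous paths $\gamma \colon [0,1]\to\R^n$ with $\gamma(0)=0$ and horizontal absolutely continuous paths $\eta\colon[0,1]\to FM$ with $\eta(0)=u_0$, characterized by $\dot\eta(t) = H_i(\eta(t))\,\dot\gamma^i(t)$. Since $H_1,\dots,H_n$ form a $g^{FM}$-orthonormal frame for $\mathcal H$, we get the pointwise identity $g^{FM}_{\eta(t)}(\dot\eta(t),\dot\eta(t)) = \sum_i (\dot\gamma^i(t))^2 = \|\dot\gamma(t)\|_{\R^n}^2$. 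By Corollary~\ref{lem:Lphi}, $-L_{\R^n}(\phi_{u_0}^{-1}(\gamma)(t),\tfrac{d}{dt}\phi_{u_0}^{-1}(\gamma)(t)) = \tfrac12\|\dot c(t)\|_{\R^n}^2$ where $c = \phi_{u_0}^{-1}(\gamma)$ is the anti-development. Hence the functional to be minimized in the definition of a most probable path for the driving process is exactly $\tfrac12\int_0^1 \|\dot c(t)\|^2_{\R^n}\,dt = \tfrac12\int_0^1 g^{FM}_{\eta(t)}(\dot\eta(t),\dot\eta(t))\,dt$, the sub-Riemannian \emph{energy} of the horizontal lift $\eta$.

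Next I would translate the boundary conditions: a path $\gamma$ on $M$ from $x_0$ to $y$ corresponds, under lifting, to a horizontal path $\eta$ in $FM$ from $u_0$ to some endpoint $\eta(1)$ with $\pi(\eta(1)) = y$, i.e. $\eta(1) \in \pi^{-1}(y)$; conversely every horizontal path from $u_0$ to a point of $\pi^{-1}(y)$ projects to such a $\gamma$. Since the H\"ormander condition holds on $Q(u_0)$ and $Q(u_0)$ has compact (hence closed) fibers, Proposition~\ref{metricprop} applies: $Q(u_0)$ is a closed subbundle, $d_{FM}$ is continuous on it, the metric space $(Q(u_0),d_{FM})$ is complete, and any two points can be joined by a minimal geodesic. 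In particular the fiber $\pi^{-1}(y)\cap Q(u_0)$ is compact, so the continuous function $p \mapsto d_{FM}(u_0,p)$ attains a minimum on it; this gives a minimizing horizontal path realizing $d_{FM}(u_0,\pi^{-1}(y))$. By the standard Cauchy--Schwarz relation between length and energy for a sub-Riemannian metric (energy $\geq \tfrac12(\text{length})^2$, with equality iff the curve has constant speed, and every minimizer can be reparametrized to constant speed), minimizing the energy over horizontal paths from $u_0$ to $\pi^{-1}(y)$ is equivalent to minimizing the length, and the minimizers are constant-speed reparametrizations of minimal geodesics from $u_0$ to $\pi^{-1}(y)$. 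Projecting such a geodesic down by $\pi$ yields the asserted most probable path for the driving process, and existence follows from the compactness/completeness just invoked; this also matches the formal definition of most probable paths in \cite{sommer_anisotropic_2015}.

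The main obstacle is the regularity and boundary issues hidden in the phrase ``most probable path.'' The Euclidean Onsager--Machlup theorem and Corollary~\ref{lem:Lphi} are stated for \emph{smooth} paths, whereas sub-Riemannian minimal geodesics are a priori only Lipschitz, and one must check that the projected geodesic is smooth enough (or that the functional extends appropriately) for the variational characterization to be meaningful; I would handle this by noting that normal geodesics, which solve the Hamilton--Jacobi equations on $T^*Q(u_0)$, are smooth, and argue via the absence of strictly abnormal minimizers in this setting, or simply restrict the comparison class to smooth paths and pass to the infimum, using density of smooth horizontal paths in the $W^{1,2}$ topology on horizontal paths (a consequence of the ball-box theorem) so that the infimum of the energy over smooth paths equals $\tfrac12 d_{FM}(u_0,\pi^{-1}(y))^2$. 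A secondary subtlety is that the anti-development $\phi_{u_0}^{-1}$ is only defined on paths contained in $\pi(Q(u_0))$, so one should restrict attention to target points $y$ reachable from $x_0$, which is exactly where the problem is well-posed; for other $y$ there is no driving path at all and the statement is vacuous.
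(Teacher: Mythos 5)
Your proof is correct and follows essentially the same route as the paper: use the development map and the $g^{FM}$-orthonormality of the $H_i$ to identify the Euclidean Onsager--Machlup functional of the driving process with the sub-Riemannian energy of the horizontal lift, then obtain existence by minimizing over the fiber $\pi^{-1}(y)\cap Q(u_0)$ using its compactness and the continuity of $d_{FM}$ from Proposition~\ref{metricprop}. You are in fact somewhat more careful than the paper's own proof on the energy-versus-length equivalence and on the regularity of minimizers, both of which the paper passes over silently.
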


\begin{proof}
Let $\zeta $ be a smooth path in $\R^n$ and consider the development $\eta$ of $\zeta$ in $FM$ such that 
  $$
 \dot{\eta}_t
  =H_i({\eta}_t)\dot{\zeta}^i_t.
  $$
  Since the $H_i$ are orthonormal in the sub-Riemannian metric $g^{FM}$,
  \begin{align*}
  \|\dot{\eta}\|_{FM}^2
  &=   \| \dot{\zeta}_t\|_{\R^n}^2
  =   -L(\zeta_t).
  \end{align*}
In the following, $\tilde{\gamma}$ denotes the horizontal lift of a path $\gamma$ on $M$ with $\tilde{\gamma}(0)=u_0$. The most probable path  for the driving process are then given as
  \begin{align*}
&\mathrm{argmin}_{\gamma, \gamma(0)=x_0, \gamma(1)=y }  -\int_0^1 L( \phi_{u_0}^{-1}(\gamma)(t),\tfrac{d}{dt}\phi_{u_0}^{-1}(\gamma)(t)) dt
\\
&=
\mathrm{argmin}_{\gamma, \gamma(0)=x_0,\, \gamma(1)=y } \int_0^1 \|\dot{\tilde{\gamma}}(t)\|_{g}^2 dt\\
&=
\mathrm{argmin}_{\gamma, \gamma(0)=x_0,\, \gamma(1)=y } \, l (\tilde{\gamma}) 
\ .
  \end{align*}
 By compactness of $\pi^{-1}(y)$ and continuity of $d_{FM}$ on $Q(u_0)$ (see Proposition \ref{metricprop}), there exists a horizontal path $\tilde{\gamma}$ in $FM$ that minimizes the latter expression. This proves the claim.
\end{proof}

In the case where $M$ is Riemannian and $u_0$ is orthonormal, the most probable paths for the driving process are precisely the geodesics. The above theorem thus shows that, in general, the most probable paths for the driving process are not the same as the most probable paths for $X_t$, since the scalar curvature term in \eqref{energy} does not appear in Theorem \ref{driving}. See \cite{sommer_anisotropic_2015} for visual illustrations of most probable paths for the driving process on $\mathbb{S}^2$ with varying degrees of anisotropy in $u_0$.

\section{Small Time Asymptotics}
\label{smalltime}

We assume throughout this section that $M$ is Riemannian and that the H\"{o}rmander condition is satisfied on $Q(u_0)$. Let $m$ denote the dimension of $Q(u_0)$. As argued in Section \ref{transition}, the distribution of $U_t$ has a smooth density $p_t^{Q(u_0)}$. Since the sub-Riemannan structure on $Q(u_0)$ is complete by Proposition \ref{metricprop}, the density exhibits the small time limit \cite{barilari_small-time_2012,varadhan_behavior_1967}
\begin{equation*}
  \lim_{t\rightarrow 0}
  2t\log p_t^{\Quo}(y)
  =
  -\dQuo(u_0,y)^2.
\end{equation*}
A point $u\in\Qo$ is called a \emph{smooth point} if there exists $\xi\in T_{u_0}^*\Qo$ such
that $\exp_{u_0}(\xi)=u$, $\xi$ is a regular point of $\exp_{u_0}$, and the resulting geodesic
connecting $u_0$ and $u$ is unique and length minimizing. 
The set of smooth points is open and dense \cite{agrachev_any_2009} but it may not be of measure zero.

For the standard Brownian motion on $M$, the refined statement
\begin{equation*}
  \lim_{t\rightarrow 0}
  2t\log p_t^{M}(x)
  =
  -d_g(x_0,x)^2.
\end{equation*}
holds. In this case, we can 
interpret the small time limit as the transition density
approaching the density of an isotropic diffusion in the linear tangent space
$T_xM$ because $d_g(u_0,y)^2=\|\Log_{u_0}y\|_g^2$. In the sub-Riemannian case, making the
same analogy is complicated by the fact that the exponential map is not a local diffeomorphism in a small neighborhood of ${u_0}$. However,  a smooth point ${u}$ is the
image of $ {\xi}$ under $\exp_{u_0}$ where $\xi$ is a regular point, so the $\Log$-map is defined in a neighborhood of ${u}$. Thus we locally have an interpretation similar to the Riemannian case.

When $y$ is a smooth point, there is a more explicit expansion
\begin{equation}
  p_t^{\Quo}(u_0,y)
  =
  (2\pi t)^{-m/2}
  e^{-\frac{\dQuo(u_0,y)^2}{2t}}
  v(u_0,y,t),
  \label{eq:subriemanniansmalltime}
\end{equation}
where $v(u_0,y,t)$ is smooth for  $(y,t)\in M \times (0,\infty)$ and  $v(u_0,y,t)=v(u_0,y,0) + O(t)$ for small $t$ uniformly on compact sets. The function $y\mapsto v(u_0,y,0)$ is smooth and strictly positive.

Another importance of smooth points in our context is the fact that the sub-Riemannian distance $\dQuo(u_0,u)$ is smooth in a neighborhood of $(u_0,u)$ in $\{u_0\}\times Q(u_0)$ when $u$ is a smooth point \cite{agrachev_any_2009}.

\subsection{Small Time asymptotics on $M$}
We let $d=m-n$ denote the dimension of the fibers
of $\piQuo:Q(u_0)\rightarrow M$. For $y\in M$, the
distance $\dQuo \left(u_0,\piQuo^{-1}(y)\right)$ below is the minimal sub-Riemannian distance from $u_0$ to a point in the
fibre $\piQuo^{-1}(y)$ in $Q(u_0)$. For fixed $y$, we let $\Gamma\subset\piQuo^{-1}(y)$ be the set of minimizers of
  $\dQuo(u_0,\piQuo^{-1}(y))$, and we write
  \begin{equation*}
    z(u)=\dQuo(u_0,u)^2-\dQuo\left(u_0,\piQuo^{-1}(y)\right)^2
    \ .
  \end{equation*}
  Note that $z$ is 0 on $\Gamma$ and strictly positive on $\piQuo^{-1}(y)\setminus\Gamma$.
\begin{theorem}\label{thm:small}
Let $M $ be Riemannian, fix $u_0\in FM$ and let $y\in M$ be such that $\pi(u_0)\neq y$. Assume that $\Gamma=\{u_1,\dots,u_k\}$ consists of finitely many smooth points that are non-degenerate as critical points of $\dQuo(u_0,u)^2$, i.e.\ $\dQuo(u_0,u)^2$ has non-singular Hessian (in coordinates) at $u_i$. Then the transition density $p_t^M$ of $X_t$ satisfies
\begin{equation*}
    p_t^M(y)
    =
    (2\pi t)^{-n/2} e^{-\frac{\dQuo\left(u_0,\piQuo^{-1}(y)\right)^2}{2t}}
    w(y,t)
 \end{equation*}
  and $\lim_{t\rightarrow 0}{w}(y,t)$
  exists in $(0,\infty)$.
  \label{lem:smalltimeasymp}
\end{theorem}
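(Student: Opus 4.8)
The plan is to start from the fibre-integral formula \eqref{eq:ptm},
\[
  p_t^M(\Sigma(u_0),y)=\int_{\piQuo^{-1}(y)} p_t^{\Quo}(u_0,u)\,du,
\]
where $du$ is the volume form on the fibre $\piQuo^{-1}(y)$ constructed in the proof of Proposition \ref{sublaplacian}, and then to run a Laplace (stationary phase) analysis of this integral as $t\to0$. Note first that the fibre is \emph{compact}: since $M$ is Riemannian, Proposition \ref{subbundle} makes $Q(u_0)$ a principal subbundle with fibre $\mathrm{Hol}_{u_0}$, which, being closed and contained in $\OO(T_{\pi(u_0)}M)$, is a compact group. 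Writing $d_0^2:=\dQuo\big(u_0,\piQuo^{-1}(y)\big)^2$ and $d_{Q(u_0)}(u_0,u)^2=d_0^2+z(u)$ with $z\ge0$ on the fibre and $z^{-1}(0)=\Gamma=\{u_1,\dots,u_k\}$, the idea is that the mass of $p_t^{\Quo}(u_0,\cdot)$ on the fibre concentrates, for small $t$, in small neighbourhoods of the $u_i$.

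First I would fix disjoint, relatively compact neighbourhoods $B_1,\dots,B_k\subset\Quo$ of the $u_i$ lying inside the open set of smooth points (possible since $\Gamma$ consists of smooth points, and smooth points form an open set \cite{agrachev_any_2009}), and split $\int_{\piQuo^{-1}(y)}=\sum_i\int_{B_i}+\int_R$ with $R=\piQuo^{-1}(y)\setminus\bigcup_iB_i$. On the compact set $R$ the continuous function $z$ is strictly positive, hence $z\ge\delta>0$ there, so $d_{Q(u_0)}(u_0,u)^2\ge d_0^2+\delta$ on $R$. The locally uniform Varadhan estimate $2t\log p_t^{\Quo}(u_0,u)\to-\dQuo(u_0,u)^2$ \cite{varadhan_behavior_1967,barilari_small-time_2012} (applicable on the whole compact fibre, including at its non-smooth points) then gives, for $t$ small, $\int_R p_t^{\Quo}(u_0,u)\,du\le\mathrm{vol}(\piQuo^{-1}(y))\,e^{-(d_0^2+\delta/2)/(2t)}$, which is $o\big((2\pi t)^{-n/2}e^{-d_0^2/(2t)}\big)$ because $e^{-\delta/(4t)}$ decays faster than any power of $t$. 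So $R$ does not affect the limit.

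On each $B_i$ I would use the smooth-point expansion \eqref{eq:subriemanniansmalltime}:
\[
  \int_{B_i}p_t^{\Quo}(u_0,u)\,du
  =(2\pi t)^{-m/2}e^{-d_0^2/(2t)}\int_{B_i}e^{-z(u)/(2t)}\,v(u_0,u,t)\,du .
\]
Choosing local coordinates on the fibre near $u_i$ so that $du=J(u)\,dx$ with $J$ smooth and positive, the amplitude $J(u)v(u_0,u,t)$ is smooth in $u$ and, by \eqref{eq:subriemanniansmalltime}, equals $J(u)v(u_0,u,0)+O(t)$ uniformly on $\overline{B_i}$; moreover $z$ is $C^\infty$ near $u_i$ (a smooth point, where $\dQuo(u_0,\cdot)^2$ is smooth) with $z(u_i)=0$ and, by the non-degeneracy hypothesis, positive-definite Hessian. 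Laplace's method then yields
\[
  \int_{B_i}e^{-z(u)/(2t)}\,v(u_0,u,t)\,du
  =(4\pi t)^{d/2}\,\frac{J(u_i)\,v(u_0,u_i,0)}{\sqrt{\det\mathrm{Hess}_{u_i}z}}\,\big(1+O(\sqrt t)\big).
\]
Summing over $i$, adding the negligible $R$-term, and using $m-d=n$, we obtain
\[
  p_t^M(y)=(2\pi t)^{-n/2}e^{-d_0^2/(2t)}\,w(y,t),\qquad
  \lim_{t\to0}w(y,t)=2^{d/2}\sum_{i=1}^k\frac{J(u_i)\,v(u_0,u_i,0)}{\sqrt{\det\mathrm{Hess}_{u_i}z}},
\]
which is finite and strictly positive because each $v(u_0,u_i,0)>0$ and each Hessian is positive definite.

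The step I expect to be the main obstacle is securing enough uniformity to legitimately apply Laplace's method: one needs \eqref{eq:subriemanniansmalltime} to hold on a fixed neighbourhood of each $u_i$ with $O(t)$ error uniform there, and one needs $\dQuo(u_0,\cdot)^2$ — hence $z$ — to be genuinely $C^2$ near the $u_i$ so that the Hessian and the non-degeneracy hypothesis are meaningful; both rely precisely on the $u_i$ being smooth points, which is why that assumption cannot be dropped. A secondary technical point is justifying the Varadhan bound uniformly over the entire fibre (at the possibly non-smooth points of $\piQuo^{-1}(y)\setminus\Gamma$), for which a standard sub-elliptic Gaussian upper bound for the heat kernel can be substituted if needed.
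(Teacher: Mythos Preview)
Your proposal is correct and follows essentially the same route as the paper: split the fibre integral into neighbourhoods of the minimizers and a compact remainder, kill the remainder with the uniform Varadhan estimate, and on each neighbourhood insert the smooth-point expansion \eqref{eq:subriemanniansmalltime} and run a Laplace-type analysis. The only cosmetic difference is that the paper carries out the Laplace step by hand---rescaling $\xi\mapsto\sqrt{t}\,\xi$ in normal coordinates on the fibre and invoking dominated convergence---whereas you quote Laplace's method as a black box; the technical concerns you flag (uniformity of the $O(t)$ in \eqref{eq:subriemanniansmalltime}, smoothness of $z$ near the $u_i$, and the uniform Varadhan bound on the remainder) are exactly the ones the paper handles.
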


\begin{proof}
  For each $u_i\in\Gamma$, let $U_{i}$ be a neighborhood of $u_i$ in $\piQuo^{-1}(y)$ such that the $U_i$ are disjoint and all points of $U_{i}$ are smooth. Moreover, let $K=\piQuo^{-1}(y)\setminus \Gamma_U$ where $\Gamma_U=\cup_i U_i$.
  Note that $K$ is compact.
	  From \eqref{eq:ptm} and \eqref{eq:subriemanniansmalltime}, we have
  \begin{align*}
    p_t^M(y)
    &=
    \int_{\Gamma_U}
    (2\pi t)^{-m/2}
    e^{-\frac{\dQuo(u_0,u)^2}{2t}}
    v(u_0,u,t) \mu_{\piQuo^{-1}(y)}(
    du)
    +
    \int_{K}
    p_t^{\Quo}(y) \mu_{\piQuo^{-1}(y)}(
    du)
    \\
    &=
    (2\pi t)^{-n/2}
    e^{-\frac{\dQuo\left(u_0,\piQuo^{-1}(y)\right)^2}{2t}}
    \left(
    w(y,t)
    +
    \tilde{w}(y,t)
    \right),
  \end{align*}
  where $ \mu_{\piQuo^{-1}(y)}$ is defined in Section \ref{transition} and
  \begin{equation}\label{wdef}
    w(y,t)
    =
    \int_{\Gamma_U}
    (2\pi t)^{-d/2}
    e^{-\frac{z(u)}{2t}}
    v(u_0,u,t)
     \mu_{\piQuo^{-1}(y)}(du)
  \end{equation}
 and
  \begin{equation}\label{wtildedef}
    \tilde{w}(y,t)
    =
    \int_{K}
    \frac{p_t^{\Quo}(u)}{(2\pi t)^{-n/2}e^{-\frac{\dQuo\left(u_0,\piQuo^{-1}(y)\right)^2}{2t}}}
     \mu_{\piQuo^{-1}(y)}(du)
    \ .
  \end{equation}
  To show that $\lim_{t\rightarrow 0}\tilde{w}(y,t)=0$, 
  we use that $\lim_{t\rightarrow0}2t\log(p_t^{\Quo}(u))=-\dQuo(u_0,u)^2$ uniformly
  on $K$ \cite{barilari_small-time_2012,leandre_minoration_1987}, and thus for any $\epsilon>0$,
  \begin{equation*}
    \frac{p_t^{\Quo}(u)}{(2\pi t)^{-n/2}e^{-\frac{\dQuo\left(u_0,\piQuo^{-1}(y)\right)^2}{2t}}}
    \le
    e^{-\frac{z(u)-\epsilon}{2t}}
  \end{equation*}
holds for $t$ sufficiently small.
  Since $z$ is bounded away from zero uniformly on $K$, the integrand is uniformly bounded by  $e^{-c/t}$ on $K$ for some $c>0$ when $t$ is sufficiently small.  Hence $\lim_{t\rightarrow 0}\tilde{w}(y,t)=0$.
   
Next we show that $w(y,t)$ has a limit in $(0,\infty)$ as $t\to \infty$.
Let $G^y$ be the Riemannian metric on $\piQuo^{-1}(y)$ that comes from regarding $\piQuo^{-1}(y)$ as a subgroup of $\GL(n)$.
 For each $i=1,\ldots,k$, let $\phi_i:V_i\rightarrow \piQuo^{-1}(y)$ be normal coordinate 
 charts centered at $u_i$ of the $\pi_{Q(u_0)}^{-1}(y)$-fibre with the metric $G^y$. We may assume $\phi_i(V_i)=U_i$.  Then
  \begin{align}\nonumber
    w(y,t)
    &=
    \sum_{i=1}^k
    \int_{V_i}
    (2\pi t)^{-d/2}
    e^{-\frac{z(\phi_i(\xi))}{2t}}
    v(u_0,\phi_i(\xi),t)
    |J_\xi\phi_i|
    d\xi
    \\ \label{wsum}
    &=
    (2\pi)^{-d/2}
    \sum_{i=1}^k
    \int_{t^{-\frac{1}{2}}V_i}
    e^{-\frac{z(\phi_i(\sqrt{t}\xi))}{2t}}
    v(u_0,\phi_i(\sqrt{t}\xi),t)
    |J_{\sqrt{t}\xi}\phi_i|
    d\xi
    \ ,
  \end{align}
  where $J\phi_i$ are the Jacobians of $\phi_i$, which are bounded on $V_i$ and satisfies $|J_0\phi_i|=1$.
  Define $c(u_0,u_i)=\inf_{u\in U_i\backslash\{u_i\}} \frac{z(u)}{d_{G^y}(u_i,u)^2}$. Because $u_i$ is non-degenerate, $c(u_0,u_i)>0$, and therefore
  \begin{align*}
    e^{-\frac{z(\phi_i(\sqrt{t}\xi))}{2t}}
    &\le
    e^{-\frac{c(u_0,u_i)d_{G^y}(u_i,\phi_i(\sqrt{t}\xi))^2}{2t}}
    \le
    e^{-\frac{c(u_0,u_i)\|\xi\|^2}{2}}
  \end{align*}
  for $\xi\in U_i$. The integrands in \eqref{wsum} are thus bounded by $C(u_0,y)e^{-c(u_0,y)\|\xi\|^2}$, which is integrable on $\RR^{d}$. 
  Furthermore, they are pointwise convergent since $z$ is smooth on $\Gamma_U$ and therefore
  $\lim_{t\to 0}\frac{z(\phi_i(\sqrt{t}\xi))}{t}=\frac{\partial^2}{\partial t^2}z(\phi_i(t\xi))_{| t=0} = \xi^T M_i \xi$, where $M_i$ is the Hessian matrix of $z\circ \phi_i$, which was assumed non-degenerate.
  By the dominated convergence theorem, we get
  \begin{align*}
    \lim_{t\to 0}w(y,t)
    &=
    (2\pi)^{-d/2}
    \sum_{i=1}^k
    \int_{\RR^{d}}
    e^{-\frac{1}{2}\xi^T M_i \xi}
    v(u_0,u_i,0)
    d\xi
    \ .
  \end{align*}
  Thus \begin{equation*}
    \lim_{t\to 0}w(y,t)
    =
    \sum_{i=1}^k
    v(u_0,u_i,0)
    w_i
  \end{equation*}
  with
  \begin{align*}
    w_i
    &=
    \int_{\RR^{d}}
       (2\pi)^{-d/2}
    e^{-\frac{1}{2}\xi^T M_i \xi}
    d\xi >0.  
  \end{align*}
	The claim follows because $v(u_0,u,0)>0$ everywhere.
\end{proof}
The theorem allows us to relate the sub-Riemannian distance $\dQuo$ to  $p_t^M$ and thereby relate $p_t^M$ to the most probable paths for the driving process.
In particular, the corollary below shows that the minimal $\dQuo$-distance to the $\piQuo^{-1}(y)$-fibre dominates in the transition density $p_t^M(y)$ for small $t$.
\begin{corollary}\label{cor:msmalltime}
  With the assumptions in Theorem \ref{thm:small}, the transition density of the target process satisfies
  \begin{equation*}
    \lim_{t\to 0}
    \frac{p_t^M(y)}{(2\pi t)^{-n/2}e^{-\frac{\dQuo\left(u_0,\piQuo^{-1}(y)\right)^2}{2t}}}
    =
   \lim_{t\to 0} w(y,t)
  \end{equation*}
  with $w(y,t)$ as in Theorem \ref{thm:small}, and
  \begin{equation*}
    \lim_{t\to 0}
    2t\log p_t^M(y)=-\dQuo\left(u_0,\piQuo^{-1}(y)\right)^2
    \ .
  \end{equation*}
\end{corollary}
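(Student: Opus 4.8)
The plan is to read off both statements directly from the explicit small-time expansion established in Theorem~\ref{thm:small}. Under its hypotheses we have, for every $t>0$,
\[
p_t^M(y) = (2\pi t)^{-n/2}\, e^{-\frac{\dQuo\left(u_0,\piQuo^{-1}(y)\right)^2}{2t}}\, w(y,t),
\]
with $\lim_{t\to 0} w(y,t)$ existing in $(0,\infty)$. The first identity then follows at once: dividing this relation by the nonvanishing quantity $(2\pi t)^{-n/2} e^{-\dQuo(u_0,\piQuo^{-1}(y))^2/(2t)}$ and letting $t\to 0$ gives
\[
\lim_{t\to 0}\frac{p_t^M(y)}{(2\pi t)^{-n/2} e^{-\frac{\dQuo\left(u_0,\piQuo^{-1}(y)\right)^2}{2t}}} = \lim_{t\to 0} w(y,t),
\]
the right-hand limit existing by Theorem~\ref{thm:small}.

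For the logarithmic asymptotics I would take the logarithm of the expansion,
\[
\log p_t^M(y) = -\tfrac{n}{2}\log(2\pi t) - \frac{\dQuo\left(u_0,\piQuo^{-1}(y)\right)^2}{2t} + \log w(y,t),
\]
multiply through by $2t$, and send $t\to 0$. The contribution $-nt\log(2\pi t)$ vanishes in the limit because $t\log t\to 0$, and the contribution $2t\log w(y,t)$ vanishes because $w(y,t)$ tends to a strictly positive finite limit and is therefore bounded away from $0$ and $\infty$ near $t=0$, so $\log w(y,t)$ remains bounded. What survives is precisely $-\dQuo(u_0,\piQuo^{-1}(y))^2$.

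Since each part is an elementary manipulation of a formula that has already been proved, there is no substantive obstacle here; the single point that deserves a word is the boundedness of $\log w(y,t)$ as $t\to 0$, which is supplied exactly by the assertion $\lim_{t\to 0} w(y,t)\in(0,\infty)$ of Theorem~\ref{thm:small}. We also tacitly use the standing hypothesis $\pi(u_0)\neq y$ inherited from that theorem.
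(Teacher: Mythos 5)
Your proposal is correct and is exactly the argument the paper intends: the corollary is stated without proof as an immediate consequence of the expansion $p_t^M(y)=(2\pi t)^{-n/2}e^{-\dQuo(u_0,\piQuo^{-1}(y))^2/(2t)}w(y,t)$ together with $\lim_{t\to 0}w(y,t)\in(0,\infty)$, and your division and $2t\log(\cdot)$ manipulations, including the observations that $t\log t\to 0$ and that $\log w(y,t)$ stays bounded, are precisely the elementary steps being elided.
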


\section{Applications in Statistics}\label{statistics}
In the classical Fr\'echet mean estimation, the mean is estimated by minimizing squared geodesic distances as explained in the introduction.
However, we can allow anisotropic covariance structures and  simultaneously encode mean and covariance in the distance measure by using the frame bundle and replacing the geodesic distance by the sub-Riemannian distance. In this way, the distances are weighted by the covariance in the sense that directions with high variance will contribute less to the distance than directions with small variance.

We can then define the  mean and covariance of a stochastic variable $X$ on $M$ to be the point $\sigma \in \Symp M$ that minimizes
\begin{equation*}
\mathbb{E} d_{\Symp M}(\sigma, q^{-1}(X))^2+F(\sigma),
\end{equation*}
where $F$ is a real valued map that prevents $\sigma$ from tending to $0$. Recall here that $\sigma$ encodes the precision matrix, and the covariance is found as the inverse $\sigma^{-1}$. As in the case of the Fr\'{e}chet mean, there is no guarantee that such a minimizer exists or is unique.
\begin{figure}[t]
  \begin{center}
    \includegraphics[width=.5\columnwidth,trim=80 280 80 250,clip]{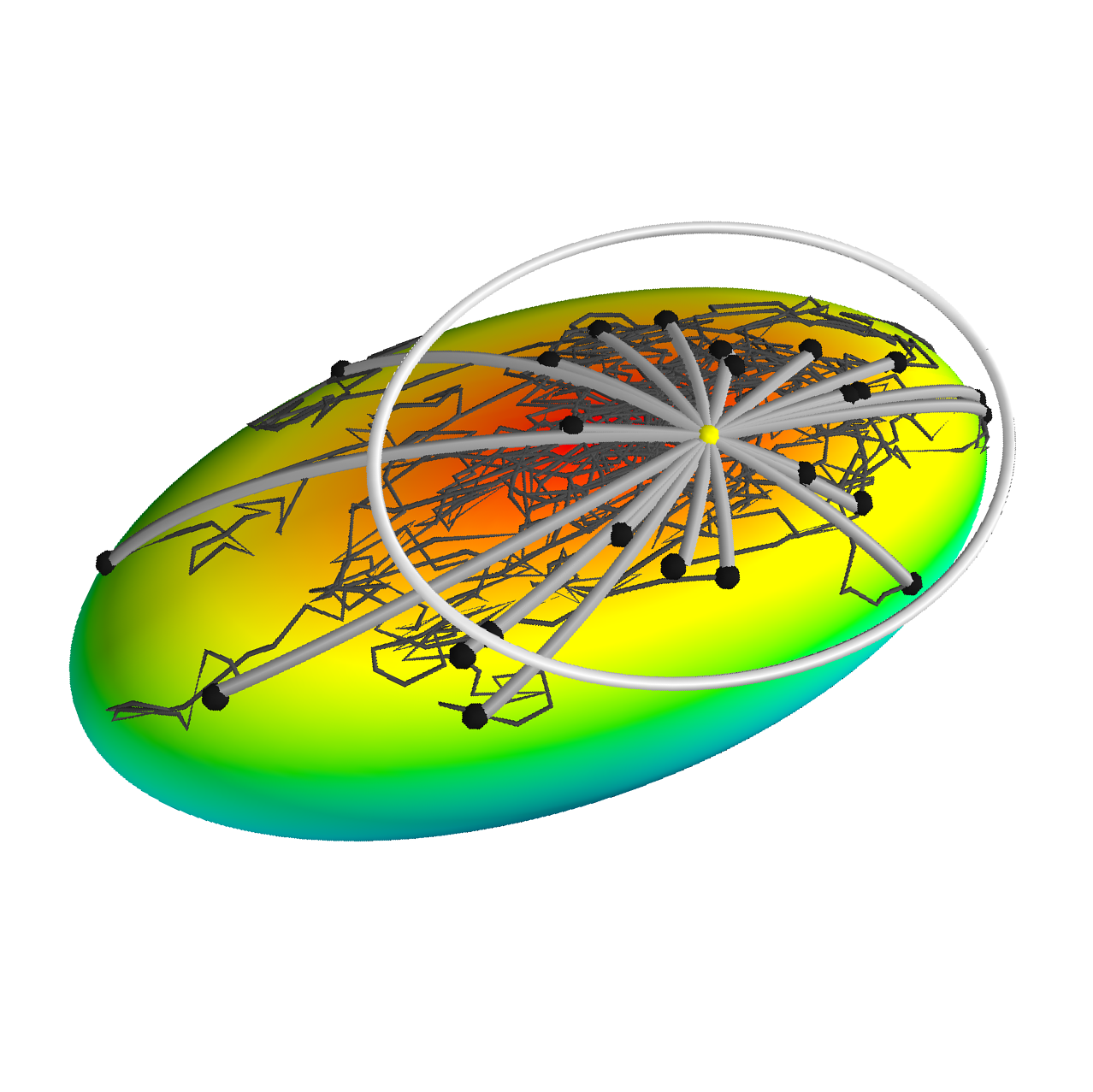}
  \end{center}
  \caption{
    Sampled data on an ellipsoid realized as endpoints of sample paths of the process $X_t$ (black lines and points). Mean $x_0=\pi(u_0)$ (green point) and covariance $\sigma^{-1}$ (ellipsis over mean) are estimated by minimizing \eqref{minimize}. The most probable paths for the driving process connects $x_0$ and the sample paths (gray lines) and minimize the distances $d_{\Symp M}\left(\sigma, q^{-1}(x_i)\right)$.
    }

  \label{fig:ellipsoidest}
\end{figure}

Given data points $x_1,\dots,x_N$ on $M$, we suggest to estimate the mean and covariance by
\begin{equation}\label{minimize}
  \textrm{argmin}_{\sigma \in \Symp M} \sum_{i=1}^Nd_{\Symp M}\left(\sigma, q^{-1}(x_i)\right)^2
  -\frac{N}{2}\log(\mathrm{det}_g\sigma)
\end{equation}
when $M$ is equipped with the Riemannian metric $g$. The term $-\frac{N}{2}\log(\mathrm{det}_g\sigma)$ is chosen such that \eqref{minimize} is exactly the maximum likelihood estimator of the mean and covariance in the Euclidean case, see e.g. \cite{tipping_probabilistic_1999}.
Figure~\ref{fig:ellipsoidest} shows estimated mean and covariance for sampled
data on an ellipsoid. The most probable paths for the driving process $W_t$ that
realize the distances $d_{\Symp M}\left(\sigma, q^{-1}(x_i)\right)$ are shown along
with sample paths from the process $X_t$ and samples drawn from $X_1$.

From a different viewpoint, the results of this paper allow a maximum-likelihood formulation that resembles the estimator \eqref{minimize}. Recall from Section \ref{sec:hormander} that when $M$ is Riemannian and $Q(u_0)$ satisfies the H\"{o}rmander condition, the process $X_t$ on $M$ has a density $p_t^M(\Sigma(u_0),x)$. 
We can then consider the estimation of $\Sigma(u_0)$ given the data points $x_1,\dots, x_N$ on $M$. The maximum likelihood approach suggests that we estimate our parameter $\sigma \in \Symp M$ by 
\begin{equation*}
\textrm{argmin}_{\sigma \in \Symp M} 
-\sum_{i=1}^N 
\log p_t^M(\sigma,x_i)
\ .
\end{equation*}
No explicit formula for the density $p_t^M$ is known in general, not even in the isotropic case, but heuristics or numerical approximations can be derived.
As an example, we showed in Corollary \ref{cor:msmalltime} that, in
non-degenerate situations, $\lim_{t\to 0}
2t\log p_t^M(y)=-\dQuo\left(u_0,\piQuo^{-1}(y)\right)^2$, and, from Theorem \ref{lem:smalltimeasymp}, the 
log-likelihood has the form
\begin{equation*}
-
\sum_{i=1}^N 
\left(
\frac{\dQuo\left(u_0,\piQuo^{-1}(x_i)\right)^2}{2t}
+\frac{n}{2}\log(2\pi t)-\log(w(x_i,t))
\right)
\end{equation*}
suggesting the estimator
\begin{equation*}
\textrm{argmin}_{u_0 \in FM}  
\sum_{i=1}^N 
\left(
\dQuo\left(u_0,\piQuo^{-1}(x_i)\right)^2
-2t\log(w(x_i,t))
\right)
\end{equation*}
with $t$ fixed. The terms $\log(w(x_i,t))$ are in general not a priori known.
They can be numerically estimated or heuristically set to $\log\det(u_0)_g/2$ which is
the normalization term of a Brownian motion with covariance $\Sigma(u_0)^{-1}$ in the
linear tangent space $T_{\pi(u_0)}M$. The latter approach results in the same estimator as
\eqref{minimize}. Note that both methods involve the length of the most probable
paths for the driving process. The error in the approximation of the likelihood
will in general be smaller for data that concentrates around $\pi(u_0)$.
The approach has been applied for mean/template and covariance estimation on concrete data in \cite{sommer_anisotropic_2015}.

The probability of a path as defined in Section \ref{sec4} yields yet another way of measuring the distance between points. Maximizing this probability corresponds to maximizing the Onsager-Machlup functional \eqref{energy}.
Alternatively, the limit of \eqref{OMeq} when $\epsilon \to 0$ can be viewed informally as a 'density' for the paths of the Brownian motion. The maximum-likelihood method would then suggest to maximize the Onsager-Machlup functional. Note that this density is only heuristic since it is concentrated on smooth paths while the Brownian motion is almost surely nowhere smooth. Moreover, it is not clear with respect to which measure it should be a density. 
In this setting, there are only results available when the Brownian motion is isotropic. In the isotropic case, the most probable paths are not quite geodesics. The Onsager-Machlup functional also tries to minimize the energy of paths but tends to favour paths through areas with high scalar curvature. It would be interesting to see how this affects the Fr\'{e}chet mean in practical computations. 
To obtain results for general Brownian motions, we could work with  the most probable paths of the driving process instead. This approach supports the estimator~\eqref{minimize}.

There are some challenges caused by the complex behaviour of the global structure of a sub-Riemannian geometry. 
Most geodesics with respect to $d_{FM}$ can be computed by the Hamiltonian equations, which makes practical computations possible. However, there may be geodesics that can not be computed this way. Moreover, the sub-Riemannian exponential map cannot be used to define normal coordiantes on $Q(u_0)$, which is often useful in ordinary Riemannian geometry to simplify computations. Finally, there is very little known about how the sub-Riemannian geometry changes when we vary the parameter $\sigma$.

\section*{Acknowledgments}
The authors wish to thank Peter W. Michor and Sarang Joshi for suggestions for the geometric
interpretation of the sub-Riemannian metric on $FM$ and discussions on diffusion
processes on manifolds. The work was supported by the Danish Council for Independent Research,
the CSGB Centre for Stochastic Geometry and Advanced Bioimaging funded by a grant from the Villum foundation,
and the Erwin Schr\"odinger Institute in Vienna.

\bibliographystyle{AIMS}
\bibliography{Zotero}

\medskip
Received xxxx 20xx; revised xxxx 20xx.
\medskip

\end{document}